\documentclass[12pt]{amsart}
\usepackage{graphicx}
\usepackage{amsfonts}
\usepackage{amsmath}
\usepackage{amsthm}
\usepackage{mathrsfs}
\usepackage[v2]{xy}
\usepackage{amscd}  
\usepackage{amssymb}
\usepackage{graphicx}
\usepackage{enumerate}
\usepackage{color}
\usepackage{setspace}
\usepackage{tikz-cd}
\usepackage{hyperref}
\usepackage[margin=1in]{geometry}
\usepackage{setspace}

\usepackage[style = numeric]{biblatex}
\newtheorem{theorem}{Theorem}[section]

\newtheorem{lemma}[theorem]{Lemma}
\newtheorem{proposition}[theorem]{Proposition}
\newtheorem{conjecture}[theorem]{Conjecture}

\theoremstyle{definition}

\theoremstyle{remark}
\newtheorem{remark}[theorem]{Remark}

\newtheorem*{acknowledgements}{Acknowledgements}

\numberwithin{equation}{section}

\def\C{{\mathbb C}}

\def\Q{{\mathbb Q}} 
\def\Z{{\mathbb Z}}

\def\F{{\mathbb F}}
\def\D{{\mathcal D}}
\def\I{{\mathcal I}}

\renewcommand{\P}{\mathbb{P}}

\newcommand{\Mon}{\mathrm{Mon}}
\newcommand{\Aut}{\mathrm{Aut}}

\newcommand{\Gal}{\mathrm{Gal}}
\newcommand{\IMG}{\mathrm{IMG}}

\newcommand{\mf}{\mathfrak}
\renewcommand{\O}{\mathcal{O}}
\newcommand{\A}{\mathbb{A}}

\newcommand{\kbar}{\overline{k}}

\title{A Dynamical Lifting Problem For Additive Polynomials}
\author[D.~Tedeschi]{Daniel Tedeschi}

\begin{document}

\doublespacing

\begin{abstract}
We introduce a dynamical analogue of the lifting problem for Galois covers of algebraic curves and find a negative solution for the collection of separable additive and semi-additive polynomials over $\overline{\mathbb{F}}_p$. We also explicitly compute the dimension of the space of linear conjugacy classes in $\mathcal{M}_{p^m}(\overline{\mathbb{F}}_p)$ which contain additive or semi-additive polynomials providing explicit counterexamples to Thurston Rigidity in positive characteristic.
\end{abstract}

\subjclass[2020]{Primary: 37P25. Secondary: 37P05, 12F10.}
\keywords{branched cover, dynamical system, Galois group, iterated monodromy group, lifting, wild ramification}

\maketitle

\section{Introduction}

Let $k$ be a perfect field and $f(z) \in k(z)$ a rational map of degree $d \geq 2$. We consider $f$ as a dynamical system, studying the behavior its iterates $f^n = f \circ \cdots \circ f$, with the convention $f^0(z) = z$. The sequence of iterates $\{f^n\}$ forms an infinite tower of branched self-covers of the projective line $f^n: \P^1_k \to \P^1_k$. Two rational maps $f,g$ are \textit{linearly conjugate} if there exists a change of coordinates $\varphi \in \Aut(\P^1_k)$ such that $f^\varphi = \varphi^{-1} \circ f \circ \varphi = g$. A key part of the study of dynamics of rational maps over any field is the characterization of combinatorial and algebraic objects associated to rational maps which are invariant under linear conjugacy. In the present paper, we consider $k = \overline{\F}_p$ and find concrete examples of dynamical invariants which arise over $\overline{\F}_p$ but not $\C$. We also establish the failure of well-known finiteness properties for rational maps over $\C$ in two families of polynomials over $\overline{\F}_p$.

Fix an algebraic closure $\overline{k}$ of $k$. We say a rational map $f(z) \in k(z)$ is \textit{wildly ramified} if all of its iterates are wildly ramified as covers of $\P^1_{\kbar}$, i.e. if the ramification indices of the critical points of $f$ are divisible by the characteristic $p$ of $k$. Otherwise, we say $f$ is \textit{tamely ramified}. Heuristically, the dynamical behavior of a tamely ramified rational map in positive characteristic should closely mirror that of a rational map in characteristic zero. Thus, we turn to wildly ramified maps for examples of behaviors unique to positive characteristic. One way to detect said `uniqueness' is to consider which maps in positive characteristic are the reduction of a map in characteristic zero with the `same' dynamical invariants, i.e. which rational maps admit \textit{lifts}. The most thorough discussion of liftability for dynamical systems comes from Pink \cite{Pink13-Lifting}, in which he establishes liftability for post-critically finite quadratic rational maps which are tamely ramified. The question of existence of lifts of dynamical systems with their dynamical invariants is closely related to the lifting question for Galois covers of algebraic curves, see \cite[\S 9]{HarbaterEtAl18}.

A polynomial $f(z) \in \overline{\F}_p[z]$ is \textit{additive} if $f(a + b) = f(a) + f(b)$ for all $a,b \in \overline{\F}_p$. Additive polynomials necessarily have degree $p^m$ and have exactly one ramification point of index $p^m$, making them wildly ramified. We compute the geometric iterated monodromy group of these polynomials and find they do not admit lifts to characteristic zero which preserve this dynamical invariant.

\begin{theorem}\label{thm:additiveIMG}
Let $f(z) \in \overline{\F}_p[z]$ be an additive, separable polynomial of degree $p^m$. For each $n\geq 1$, we have an isomorphism
\[\Mon_{\overline{\F}_p}(f^n) \cong (\Z/p\Z)^{mn}.\]
The geometric iterated monodromy group then has the form
\[\IMG_{\overline{\F}_p}(f) \cong \varprojlim_{n\geq 1}(\Z/p\Z)^{mn}.\]
Moreover, $f$ does not admit a lift over any characteristic zero DVR which preserves its geometric iterated monodromy group.
\end{theorem}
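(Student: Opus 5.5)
The plan is to identify the splitting field of $f^n(z)-t$ over $\overline{\F}_p(t)$ directly, compute its Galois group, and then use a ramification comparison to rule out a monodromy-preserving lift.

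\textbf{Step 1: the monodromy group of $f^n$.} Since $f$ is additive, $f^n$ is additive of degree $p^{mn}$. Because $f$ is separable, $f'$ is a nonzero constant, and hence so is $(f^n)'$. It follows that $\ker(f^n)\subset\overline{\F}_p$ is an $\F_p$-subspace of order exactly $p^{mn}$, that is, $\ker(f^n)\cong(\Z/p\Z)^{mn}$.

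Let $\alpha$ be a root of $f^n(z)-t$ in an algebraic closure of $\overline{\F}_p(t)$. Additivity gives the full root set as $\{\alpha+c : c\in\ker(f^n)\}$. The polynomial $f^n(z)-t$ is irreducible over $\overline{\F}_p(t)$, because it has degree one in $t$. Its splitting field is therefore $\overline{\F}_p(\alpha)$, of degree $p^{mn}$.

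For each $c\in\ker(f^n)$ the translation $\alpha\mapsto\alpha+c$ defines an automorphism of this field over $\overline{\F}_p(t)$. This yields an injective homomorphism $\ker(f^n)\to\Gal$, and comparing orders shows it is an isomorphism. Hence $\Mon_{\overline{\F}_p}(f^n)\cong(\Z/p\Z)^{mn}$.

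\textbf{Step 2: the inverse limit.} The restriction maps $\Mon(f^{n+1})\to\Mon(f^n)$ correspond, under the isomorphism of Step 1, to the surjection $\ker(f^{n+1})\to\ker(f^n)$ given by $c\mapsto f(c)$. Passing to the limit gives
\[\IMG_{\overline{\F}_p}(f)\cong\varprojlim_{n\geq 1}(\Z/p\Z)^{mn},\]
with transition maps $f$ restricted to the kernels. The key observation for the next step is that every finite quotient $\Mon(f^n)$ is an elementary abelian $p$-group.

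\textbf{Step 3: no lift preserving monodromy.} Suppose $F$ is a lift of $f$ over a characteristic-zero DVR with residue field $\overline{\F}_p$ whose geometric monodromy groups are $(\Z/p\Z)^{mn}$. Take $n=1$ and consider $F$ on the generic fibre over $\overline{K}$.

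In characteristic zero all ramification is tame, so every inertia group is cyclic. The lift $F$ has degree $p^m$, and the point $\infty$ is totally ramified because $F$ is a polynomial, or at least has a fibre of that size reflecting the reduction. So inertia at $\infty$ is cyclic of order divisible by $p^m$. But the elementary abelian group $(\Z/p\Z)^m$ has no element of order $p^m$ when $m\ge 2$.

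For $m=1$ a different argument is needed. Riemann--Hurwitz for the Galois closure over $\P^1_{\overline K}$, with group $G=\Z/p\Z$, applies. The cover $F:\P^1\to\P^1$ has degree $p$ and is Galois with group $\Z/p\Z$, so its ramification is determined by the branch points, each with index $p$. Riemann--Hurwitz gives
\[-2=p(-2)+\sum(p-1),\]
which forces exactly two branch points. The cover is then conjugate to $z^p$, whose second iterate has monodromy $\Z/p^2\Z$ rather than $(\Z/p\Z)^2$. Taking $n=2$ therefore yields the contradiction.

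The same reasoning handles $m\ge2$ uniformly. In characteristic zero, a polynomial whose monodromy group is abelian must be linearly conjugate to $z^d$, since an abelian transitive group acts regularly and the inertia at $\infty$ is a $d$-cycle, forcing cyclicity. Its iterates then have cyclic monodromy $\Z/d^n\Z$, which is not isomorphic to $(\Z/p\Z)^{mn}$.

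\textbf{Main obstacle.} The hardest step is to make precise what ``lift preserving the geometric IMG'' means in the paper, and to ensure the lift is still a polynomial, with $\infty$ totally ramified. I would first handle this by noting that $\infty$ is a totally invariant point of $f$. I would then either argue that this property specializes, or work solely with the abelian-group/tame-inertia argument: a transitive abelian group is regular, and a tame cover $\P^1\to\P^1$ with abelian Galois group of rank $\ge2$ is impossible in characteristic zero. The latter holds because the Galois groups of covers of $\P^1$ are finite subgroups of $PGL_2$, and among abelian ones only $\Z/n$ and $(\Z/2)^2$ occur. The case $p=2$, $m=1$ is then handled at level $n=2$, where the group would have to be $(\Z/2)^2$, forcing an $(\Z/2)^2$ Klein cover, which is not of the form $F^2$ for a polynomial.
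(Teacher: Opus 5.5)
Your Steps 1 and 2 agree with the paper's computation, and you usefully make explicit the transition maps $c\mapsto f(c)$ and the irreducibility of $f^n(z)-t$. In Step 3, the one argument that works is the one in your last paragraph, and it is the paper's. A transitive abelian group acts regularly, so a monodromy-preserving lift of $f^n$ is a Galois cover $\P^1_{\overline{K}}\to\P^1_{\overline{K}}$. This forces $(\Z/p\Z)^{mn}$ into $\mathrm{PGL}_2(\overline{K})$, whose finite abelian subgroups are cyclic or $(\Z/2\Z)^2$.

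The rest of Step 3 assumes the lift is a polynomial, or at least has a totally ramified point over $\infty$. The definition only gives $\tilde f\in R(z)$, and total ramification in the special fibre need not survive in the generic fibre. Some of these claims are also false outright:
\begin{itemize}
\item a degree-$p$ Galois cover with two branch points is $\mu\circ z^p\circ\nu$, which is conjugate to $\psi\circ z^p$ but not in general to $z^p$;
\item a polynomial with abelian monodromy need not be conjugate to $z^d$, nor have cyclic monodromy for its iterates: $z^2+1$ has monodromy $\Z/2\Z$, while its second iterate has monodromy $D_4$.
\end{itemize}

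This breaks your treatment of $p=2$, $m=1$ at level $n=2$. The map $F(z)=(z^2+z)/(1-4z)\in\Z_2(z)$ has good reduction $z^2+z$, yet $\infty$ is not even a critical point of $F$. Its deck involution is $\tau(z)=(z+1)/(4z-1)$, and its critical values $v_{1,2}=-(3\mp\sqrt{5})/8$ satisfy $\tau(v_1)=v_2$. A quadratic map with this property is conjugate to some $\kappa(z^2-1)/(z^2+1)$. That map is invariant under $z\mapsto -z$ and changes sign under $z\mapsto 1/z$, so the second iterate $F^2$ is Galois with group $(\Z/2\Z)^2\cong\Mon_{\overline{\F}_2}(f^2)$. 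Hence level $2$ gives no contradiction, and you must go to $n=3$, where $(\Z/2\Z)^3$ does not embed in $\mathrm{PGL}_2(\overline{K})$.

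Likewise $(p,m)=(2,2)$ is not excluded at $n=1$ and needs $n\ge 2$. Your claim that rank $\ge 2$ is impossible contradicts your own exception $(\Z/2\Z)^2$.

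The repair is to drop the polynomial-based arguments and run the $\mathrm{PGL}_2$ argument with $n$ chosen so that $mn\ge 2$ for odd $p$ and $mn\ge 3$ for $p=2$. This is what the paper's ``there exists an iterate'' amounts to.
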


Similar arguments can be used for \textit{semi-additive} polynomials over $\overline{\F}_p[z]$, which take the form $g(z) = z^n \circ f(z)$ for a suitable additive, separable polynomial $f(z) \in \overline{\F}_p[z]$.

\begin{theorem}\label{thm:introsemiadditive}
Let $n \geq 1$ such that $p \nmid n$. Let $e$ denote the order of $p$ in $(\Z/n\Z)^\times$. Let $f(z) \in \overline{\F}_p[z]$ be an additive polynomial of the form
\[f(z) = \sum_{i=0}^\ell a_i z^{p^{ie}},\]
where each $a_i \in \overline{\F}_p$ and $a_0 \neq 0$. Then $g(z) = z^n \circ f(z)$ does not admit a lift over any characteristic zero DVR which preserves its geometric iterated monodromy group.
\end{theorem}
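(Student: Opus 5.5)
The plan has two parts. First, I compute the geometric monodromy groups of the iterates $g^k$ well enough to see that $\IMG_{\overline{\F}_p}(g)$ contains a large elementary abelian $p$-subgroup. Second, I show that no characteristic zero dynamical system with the same ramification data can have such a group. The structural fact to exploit is that, because $f$ only involves powers $z^{p^{ie}}$ and $p^e \equiv 1 \pmod n$, we have $f(\zeta z) = \zeta f(z)$ for every $n$-th root of unity $\zeta$. Hence $f$ commutes with $z \mapsto \zeta z$, and so $z^n \circ f = f^{(n)} \circ z^n$ for some polynomial $f^{(n)}$ (the "twisted" additive-type map, with $f^{(n)}(z^n) = f(z)^n$). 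Using this, I rewrite
\[
g^k = (z^n \circ f)^k = z^n \circ F_k \circ \cdots
\]
so that the splitting field of $g^k(z) - t$ over $\overline{\F}_p(t)$ is obtained from a Kummer extension of degree $n$, followed by the Artin--Schreier--Witt type extensions cut out by the additive polynomials.

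Concretely, the steps are as follows.

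\emph{Step 1.} Show by induction on $k$ that $g^k = z^n \circ f_k$, where $f_k$ is separable and additive composed with $n$-th power twists. More precisely, $g^k = z^n \circ f \circ (z^n \circ f)^{k-1}$, and I commute the $n$-th powers outward using $f(\zeta z) = \zeta f(z)$.

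\emph{Step 2.} Compute $\Mon_{\overline{\F}_p}(g^k)$, using Theorem~\ref{thm:additiveIMG} for the additive layers. I expect the result to be a semidirect product $(\Z/p\Z)^{m k}\rtimes \mu_n$ (or a closely related extension), where $p^m = \deg f$. In particular, its Sylow $p$-subgroup is elementary abelian of rank growing linearly in $k$, and it equals the inertia group at $\infty$.

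\emph{Step 3.} Suppose a lift $G$ over a characteristic zero DVR preserves the IMG. Then the geometric monodromy of $G^k$ over $\C$ (or over the generic fibre's algebraic closure) is isomorphic to that of $g^k$. Over characteristic zero, the monodromy group of a polynomial of degree $d$ is generated by inertia at finite branch points together with a full $d$-cycle at infinity. The degree of $g^k$ is $(np^m)^k$, and a $(np^m)^k$-cycle must then occur in the group. But in the group from Step 2, every element has order dividing $n\cdot p$ (or at least has bounded order). For $k$ large, or even already for $k=1$ when $p^m>p$, this is a contradiction. Alternatively, one can argue via abelianness of a normal subgroup versus transitivity, as in the proof of Theorem~\ref{thm:additiveIMG}.

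The main obstacle is Step 2: pinning down the monodromy group precisely enough to bound element orders, since the Kummer layer and the additive layers interleave at every iterate. What I would try first is to work with the Galois closure directly. The roots of $g^k(z)=t$ are $f_k^{-1}(\zeta\, t^{1/n})$, so the splitting field is $\overline{\F}_p(t^{1/n})$ adjoined with roots of additive-type equations, whose Galois group over $\overline{\F}_p(t^{1/n})$ embeds into the kernel group $\ker f_k \cong (\Z/p\Z)^{mk}$ acting by translation. This gives order bounds dividing $np$ regardless of the fine structure, which is all Step 3 needs.
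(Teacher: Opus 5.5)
Your proposal has genuine gaps, and the passage to higher iterates does not help.

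\textbf{Steps 1--2 are false.} From $f(\zeta z)=\zeta f(z)$ you get $z^n\circ f=h\circ z^n$, where $f(z)=z\,u(z^n)$ and $h(w)=w\,u(w)^n$. But $f\circ z^n$ is not of the form $z^n\circ(\cdot)$: $f(z^n)=z^n v(z^n)$, and $v(z^n)$ has simple roots. So the $n$-th powers cannot be pushed outward, $f\circ g^{k-1}$ is not additive, and the roots of $g^k(z)=t$ are not translates of one root by a kernel group.

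The proposed $\Mon_{\overline{\F}_p}(g^k)\cong(\Z/p\Z)^{mk}\rtimes\mu_n$, or any group of that size, is impossible for $n\ge 2$, $k\ge 2$. Transitivity on $g^{-k}(t)$ forces order at least $(np^m)^k>np^{mk}$. The bounded-order claim that Step 3 relies on also fails: $0$ is a fixed critical point of $g$ of index $n$, so $g^k$ is tamely ramified of index $n^k$ at $0$. The cyclic inertia above $0$ then contains an $n^k$-cycle. In fact $g^2$ is not even Galois: above $0$ it has index $n^2$ at $z=0$ but index $n$ at the preimages of nonzero roots of $f$.

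\textbf{Step 3 assumes the lift is a polynomial, which is the deeper problem.} A geometric $\IMG$-lift is only required to lie in $R(z)$, and wild total ramification at $\infty$ can split on the generic fibre. Since $\Mon_{\overline{\F}_p}(g)\cong Z\rtimes\mu_n$ (semidirect, as you say) acts regularly and is non-cyclic once $n\ge 2$ or $\ell e\ge 2$, it contains no $d$-cycle. Hence \emph{every} lift inducing an equivalent action has no totally ramified point at all. Your $k=1$ argument therefore only shows that a lift cannot be a polynomial, not that no lift exists.

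\textbf{The missing idea is Galois-ness at level one, which is the paper's whole proof.} Since $g^{-1}(t)=\{\zeta_n^i\theta+\alpha\}$, the cover $g$ is Galois. A lift inducing an equivalent permutation action has monodromy group of order equal to its degree, so it is a Galois cover $\P^1_{\overline K}\to\P^1_{\overline K}$. Then $Z\rtimes\mu_n$ would embed in $\mathrm{PGL}_2(\overline K)$, whose finite subgroups are cyclic, dihedral, $A_4$, $S_4$, or $A_5$.

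This argument has limits you should be aware of:
\begin{itemize}
\item The paper's body version excludes the dihedral case $(\ell e,n)=(1,2)$.
\item For $n=1$ one falls back on the additive theorem, where all iterates are Galois.
\item For $p=2$, $n=3$, $\ell=1$ one gets $Z\rtimes\mu_3\cong A_4$, which the level-one argument does not exclude.
\end{itemize}
So the unqualified statement as given needs such exceptions.
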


The computation of iterated monodromy groups is a key problem in arithmetic dynamics, and goes back to foundational works of Odoni \cite{Odoni-2,Odoni-3, Odoni-1}. See \cite{AdamsHyde25,Benedetto17,Benedetto23} for a small sample of these calculations, as well as Jones \cite{Jones14} for a survey of the area. Failure to lift for both additive and semi-additive polynomials relies on an understanding of the finite subgroups of $\Aut(\P^1_k)$. Additive and semi-additive polynomials have monodromy groups which arise as subgroups of $\Aut(\P^1_k)$ in positive characteristic, but not in characterisitic zero. The classification of finite subgroups of $\Aut(\P^1_k)$ with order coprime to the characteristic of $k$ is a classical question, see Beauville \cite{Beauville10}. For a classification of finite subgroups of $\Aut(\P^1_k)$ with order divisible by the characteristic of $k$, see Faber \cite{Faber23}.

We propose the following conjecture which accounts for the failure of liftability for both additive and semi-additive polynomials. This is a natural dynamical analog to the Oort Conjecture for Galois covers of algebraic curves proven by Pop, Obus, and Wewers \cite{Obus-Wewers14, Pop14}.

\begin{conjecture}[Dynamical Oort Conjecture]\label{conj:dynamicaloort}
Let $k$ be a perfect field of positive characteristic. Let $f(z) \in k(z)$ be a PCF rational map of degree $d \geq 2$ satisfying $f'(z) \neq 0$. Let $\mf{p}_1, \dots, \mf{p}_r$ denote the prime ideals of $\kbar(t)$ corresponding to the post-critical points of $f$. For each $1\leq i \leq r$, fix a prime ideal $\tilde{\mf{q}}_i$ of $\kbar \cdot K_\infty$ lying over $\mf{p}_i$. If all of the inertia groups $\mathcal{I}(\tilde{\mf{q}}_i/\mf{p}_i)$ are pro-cyclic then $f$ admits a lift to characteristic zero which preserves its geometric iterated monodromy group.
\end{conjecture}

Along with the failure of liftability comes the failure of finiteness, a core property of PCF maps in characteristic zero. Over $\C$, Thurston Rigidity states that away from a well-understood $1$-dimensional family, a linear conjugacy class of rational maps is determined up to finitely many choices by the graph structure of its post-critical orbit \cite{DouadyHubbard93}. Every additive polynomial of degree $p^m$ over $\overline{\F}_p$ has the same graph structure for its post-critical orbit. Despite this, we show that the dimension of the space of all linear conjugacy classes containing an additive polynomial grows without bound. Wild ramification in this family yields a striking break from the behavior of dynamical systems in characteristic zero.

\begin{theorem}\label{thm:dimensioncount}
Fix $m\geq 1$. Let $\mathcal{A}_m$ denote the collection of linear conjugacy classes of degree $p^m$ rational maps over $\overline{\F}_p$ which contain an additive, separable polynomial. Then $\dim_{\overline{\F}_p}(\mathcal{A}_m) = m$.
\end{theorem}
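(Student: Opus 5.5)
The plan is to parametrize additive separable polynomials of degree $p^m$ explicitly and determine exactly which linear conjugacies preserve the family. Then $\mathcal{A}_m$ is the image of an $m$-dimensional parameter space under a map with finite fibers.

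\textbf{Parametrization.} An additive separable polynomial of degree $p^m$ over $\overline{\F}_p$ has the form
\[f(z) = \sum_{i=0}^m a_i z^{p^i}, \qquad a_0 \neq 0,\ a_m \neq 0.\]
Separability is exactly $f'(z) = a_0 \neq 0$. So the family is parametrized by the open subset $U = \mathbb{G}_m \times \A^{m-1} \times \mathbb{G}_m$ of $\A^{m+1}$, which has dimension $m+1$. The map $U \to \mathcal{M}_{p^m}$ sending $f$ to its class has image $\mathcal{A}_m$. It therefore suffices to show that the generic fiber of $U \to \mathcal{A}_m$ has dimension exactly $1$. Equivalently, the group of conjugations that preserve the family acts on $U$ with $1$-dimensional orbits and finite stabilizers, modulo a finite ambiguity.

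\textbf{Which conjugations preserve additivity.} Let $\varphi \in \Aut(\P^1)$ with $f^\varphi = g$, where both $f$ and $g$ are additive polynomials. Since $\infty$ is the unique totally invariant point of a polynomial of degree $\geq 2$, $\varphi$ must fix $\infty$, so $\varphi(z) = \alpha z + \beta$. The polynomials $f$ and $g$ each fix $0$, and their critical structure is a single totally ramified point at $\infty$ with no finite critical points; this gives no further constraint. Instead, compute directly:
\[f^\varphi(z) = \alpha^{-1}\big(f(\alpha z) + f(\beta) - \beta\big).\]
This is additive iff its constant term vanishes, i.e.\ $f(\beta) = \beta$. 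So $\beta$ lies in the set of fixed points of $f$, which is finite: $f(z) - z$ has degree $p^m$. The resulting conjugate has coefficients $a_i' = a_i \alpha^{p^i - 1}$. Thus the family-preserving conjugations are, up to a finite choice of translation, the $\mathbb{G}_m$-action $\alpha \cdot (a_0, \dots, a_m) = (a_0, a_1\alpha^{p-1}, \dots, a_m\alpha^{p^m-1})$. Translations by fixed points $\beta$ do not change $f$ at all, because $f(z+\beta) - \beta = f(z)$ by additivity. So the fibers of $U \to \mathcal{A}_m$ are exactly the $\mathbb{G}_m$-orbits.

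\textbf{Dimension count.} The $\mathbb{G}_m$-action has finite stabilizers on $U$: since $a_m \neq 0$, any stabilizing $\alpha$ satisfies $\alpha^{p^m-1} = 1$. Hence every orbit is $1$-dimensional, and $\dim \mathcal{A}_m = (m+1) - 1 = m$. To make this rigorous as a statement about a subvariety of $\mathcal{M}_{p^m}$, I would exhibit explicit invariants. Normalize $a_m = 1$ using $\alpha$, which leaves a residual finite group $\mu_{p^m-1}$. The coordinates $(a_0, \dots, a_{m-1})$ of the slice $a_m = 1$, modulo this finite group, then map quasi-finitely onto $\mathcal{A}_m$. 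Consequently $\mathcal{A}_m$ is the image of an $m$-dimensional irreducible variety under a quasi-finite morphism, so it has dimension $m$.

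\textbf{Main obstacle.} The key point is showing that two additive polynomials in the normalized slice are conjugate only via $z \mapsto \alpha z + \beta$ with $\alpha$ in a finite group. This requires the fact that $\varphi$ must fix $\infty$; I would prove it by the standard totally-ramified-fixed-point argument. One must also confirm that quasi-finiteness into the coarse space $\mathcal{M}_{p^m}$ suffices to compute the dimension; this holds because $\mathcal{M}_{p^m}$ is a geometric quotient, so fibers over points are orbits.
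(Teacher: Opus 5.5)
Your proposal is correct and follows essentially the paper's route: the monic slice $z^{p^m} + \sum_{i<m} a_i z^{p^i}$ with $a_0 \neq 0$ maps surjectively and finite-to-one onto $\mathcal{A}_m$. Where the paper cites Lemma~\ref{lem:normalforms} for finiteness, you compute the fibers directly as $\mu_{p^m-1}$-orbits, with translations by fixed points acting trivially.

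One small fix is needed. $\infty$ is not the unique totally invariant point of every polynomial (e.g.\ $z^d$ also has $0$), but it is for separable additive $f$: $f' = a_0 \neq 0$ forces every finite fiber to consist of $p^m$ distinct points.
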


Separable, semi-additive polynomials also have a post-critical orbit whose graph structure depends only on their degree. In a similar break from characteristic zero behavior, we find that the dimension of this family of maps grows without bound.

\begin{theorem}
Fix $n, \ell \geq 1$ with $p\nmid n$. Let $e$ denote the order of $p$ in $(\Z/n\Z)^\times$. Let $\mathcal{A}_{n, \ell}$ denote the collection of linear conjugacy classes of degree $p^{\ell e}n$ rational maps over $\overline{\F}_p$ which contain a separable, $e$-divisible, semi-additive polynomial (i.e. those described in Theorem \ref{thm:introsemiadditive}). Then $\dim_{\overline{\F}_p}(\mathcal{A}_{n, \ell}) = \ell$.
\end{theorem}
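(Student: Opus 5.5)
The plan is to parametrize the semi-additive polynomials of Theorem \ref{thm:introsemiadditive} by their coefficient vectors, determine exactly which linear conjugacies carry one such polynomial to another, and show that these conjugacies form a one-dimensional group acting with finite stabilizers. The dimension of $\mathcal{A}_{n,\ell}$ is then $(\ell+1)-1=\ell$.

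\textbf{Step 1 (parameter space).} Let $U\subset \A^{\ell+1}$ be the open set of tuples $(a_0,\dots,a_\ell)$ with $a_0\neq 0$ and $a_\ell\neq 0$. The condition $a_\ell\neq0$ fixes the degree $p^{\ell e}n$, and $a_0\neq 0$ is exactly separability of $f$, since $f'=a_0$. There is a morphism $U\to \mathcal{M}_{p^{\ell e}n}$ sending $a$ to the class of $g_a=(\sum a_i z^{p^{ie}})^n$, and its image is $\mathcal{A}_{n,\ell}$. Because $U$ is irreducible of dimension $\ell+1$, it suffices to show that every nonempty fiber has dimension exactly $1$.

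\textbf{Step 2 (conjugacies are $z\mapsto \alpha z$).} Suppose $\varphi^{-1}\circ g_a\circ\varphi=g_b$ with $a,b\in U$. The point $\infty$ is a totally ramified fixed point of both maps. The other critical points are the roots of $f_a$; these are simple because $f_a$ is separable, and $z^n$ contributes ramification index $n$, which is less than the degree. Hence $\infty$ is the unique totally ramified point, so $\varphi(\infty)=\infty$ and $\varphi(z)=\alpha z+\beta$. All finite critical points of $g_a$ map to the single critical value $0$, and likewise for $g_b$. Therefore $\varphi$ must send the critical value $0$ of $g_b$ to the critical value $0$ of $g_a$, which forces $\beta=0$.

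\textbf{Step 3 (the action).} With $\varphi(z)=\alpha z$ we compute
\[
\varphi^{-1}\circ g_a\circ\varphi=\alpha^{-1}f_a(\alpha z)^n=\bigl(\lambda f_a(\alpha z)\bigr)^n, \qquad \lambda^n=\alpha^{-1}.
\]
The polynomial $\lambda f_a(\alpha z)$ has coefficients $b_i=\lambda\alpha^{p^{ie}}a_i$, so it again has the required shape with $b\in U$. Thus the fiber through $a$ is the orbit of $a$ under the one-dimensional group $\{(\alpha,\lambda):\lambda^n\alpha=1\}$, acting by $b_i=\lambda\alpha^{p^{ie}}a_i$. This orbit has dimension at most $1$.

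\textbf{Step 4 (finite stabilizers).} The stabilizer of $a$ requires $\lambda\alpha=1$ (from $i=0$) and $\lambda\alpha^{p^{\ell e}}=1$ (from $i=\ell$, using $a_\ell\neq 0$). Together these give $\alpha^{p^{\ell e}-1}=1$, which has finitely many solutions since $\ell\geq 1$, and $\lambda$ is then determined by $\alpha$. Hence each orbit is exactly one-dimensional, and $\dim \mathcal{A}_{n,\ell}=\ell$.

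I expect the main obstacle to be Step 2, namely justifying that any conjugacy between two members of the family is affine and fixes $0$. The key inputs are the uniqueness of the totally ramified point $\infty$ and of the finite critical value $0$. I would also double-check that the hypothesis involving $e$ plays no hidden role in this count; the step most sensitive to the exact form of $f$ is Step 3.
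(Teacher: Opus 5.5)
Your argument is correct in substance, though Step 2 needs one small repair (see the second paragraph below). It is organized differently from the paper's. The paper removes the one-dimensional scaling symmetry by taking a slice. It maps the $\ell$-dimensional family $\A^{\ell-1}\times(\A^1\setminus\{0\})$ of polynomials with $f$ monic onto $\mathcal{A}_{n,\ell}$. It then invokes Lemma \ref{lem:normalforms}, which says only finitely many $\varphi$ carry a polynomial to a monic polynomial fixing $0$, to conclude that this map is surjective and finite-to-one, hence dimension-preserving.

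You instead keep the full $(\ell+1)$-dimensional space $U$ and identify the conjugacies inside the family exactly from the ramification data. You then show that the fibers of $U\to\mathcal{M}_{p^{\ell e}n}$ are orbits of the torus $\{\lambda^n\alpha=1\}\cong\mathbb{G}_m$ with finite stabilizers, so the fiber-dimension theorem gives $\ell$.

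The paper's route is shorter and treats the additive and semi-additive families with a single lemma. Yours is more explicit, and it buys several things:
\begin{itemize}
\item The same scaling computation shows directly that every class has a representative with $f$ monic, which is the surjectivity of the paper's slice.
\item The condition $\lambda^n\alpha=1$ automatically absorbs the ambiguity $f\mapsto\zeta f$ (with $\zeta^n=1$) in recovering $f$ from $g$, because $f_b^n=h^n$ in $\overline{\F}_p[z]$ forces $f_b=\zeta h$.
\item You need only irreducibility of $U$, not any closedness of $\mathcal{A}_{n,\ell}$.
\item It becomes visible that $e$ plays no role in the count.
\end{itemize}

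The repair concerns Step 2 when $n=1$, which the statement allows; then $e=1$ and $g_a=f_a$ is additive. In that case $g_a$ has no finite critical points, so the critical-value argument is vacuous. Moreover, $\beta$ genuinely need not vanish: for any fixed point $\beta$ of $f_a$, the translation $z\mapsto z+\beta$ conjugates $f_a$ to itself. Nonzero fixed points exist, for example, for $z^p-cz$ with $c\neq -1$.

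The conclusion of Step 3 is unaffected. Additivity gives $\varphi^{-1}\circ g_a\circ\varphi(z)=\alpha^{-1}g_a(\alpha z)+\alpha^{-1}(g_a(\beta)-\beta)$, and $g_b(0)=0$ forces the constant term to vanish. So the fiber through $a$ is still the orbit of the scalings, and the rest of your argument goes through verbatim.
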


In Section \ref{section:background}, we provide the necessary background on dynamical invariants and the moduli space of dynamical systems. Section \ref{section:profiniteinertia} contains the terminology necessary to discuss the sequence of iterates $\{f^n\}$ as a sequence of branched covers of the projective line. In Section \ref{section:dynamicallifts}, we relate the lifting problem for dynamical systems to the lifting problem for Galois covers of algebraic curves and develop Conjecture \ref{conj:dynamicaloort}. In Section \ref{section:additivepolys}, we prove the main results of the paper, that liftability and finiteness fail in the families of additive and semi-additive polynomials. In Section \ref{section:GMlifts}, we investigate the collection of degree $p$ additive polynomials further and compare their dynamical behavior with explicit lifts constructed by Green and Matignon.

\begin{acknowledgements}
The author would like to thank Rachel Pries for asking the question which incited this investigation, and for the many helpful conversations that followed.
\end{acknowledgements}

\section{Dynamical Background}\label{section:background}

Due to Silverman \cite{Silverman98}, there exists a moduli space $\mathcal{M}_d$ of dynamical systems of degree $d$. Let $k$ be a perfect field with algebraic closure $\kbar$. Then elements of $\mathcal{M}_d(\kbar)$ correspond exactly to equivalence classes of degree $d$ rational maps in $\kbar(z)$ up to linear conjugacy by elements of $\Aut(\P^1_{\kbar})$.

The \textit{post-critical orbit} of $f$, denoted $P_f$, is the union of the forward orbits of the critical points of $f$ in $\P^1(\overline{k})$. The underlying structure of $P_f$ is that of a weighted-directed graph, whose weights satisfy some simple constraints determined by the geometry of $f$. This structure is often referred to as a \textit{mapping scheme}. See Brezin, et al. \cite{Brezin00} for a discussion of mapping schemes over $\C$, the details of which quite easily extend to any algebraically closed field. Standard algebraic number theory results imply that $P_f$ is precisely the branch locus of the collection of iterates $\{f^n\}$. We say that $f$ is \textit{post-critically finite (PCF)} whenever $P_f$ is finite.

Let $t$ be a transcendental over $k$. For each $n\geq 1$, let $K_n = k(f^{-n}(t))$ denote the splitting field of $f^n(z) - t$. Suppose $f'(z) \neq 0$, so that each $K_n$ is Galois over $k(t)$. Let $\Mon_k(f^n) = \Gal(K_n/k(t))$ denote the \textit{monodromy group} of $f^n$ over $k$. The extensions $K_n$ form an infinite tower with union $K_\infty = \bigcup_n K_n$. The extension $K_\infty$ is Galois over $k(t)$, and we refer to its Galois group as the \textit{(profinite) iterated monodromy group} of $f$ over $k$, denoted $\IMG_k(f)$\footnote{This construction is distinct from the discrete iterated monodromy group found in topology; see Nekrashevych \cite{Nekrashevych03}. Over $\C$ both objects exist and the algebraic construction is the profinite closure of the topological one.}. The iterated monodromy group is naturally isomorphic to the inverse limit $\varprojlim \Mon_k(f^n)$. We refer to $\IMG_{\overline{k}}(f)$ as the \textit{geometric} iterated monodromy group of $f$.

\section{Profinite Inertia and Iterated Monodromy}\label{section:profiniteinertia}

Let $k$ be a perfect field. Let $f(z) \in k(z)$ have degree $d \geq 2$ and suppose $f'(z) \neq 0$. Fix an element $t$ transcendental over $k$. Fix a separable closure $k(t)^{sep}$ of $k(t)$. For each $n \geq 1$, we choose $\theta_n \in k(t)^{sep}$ such that
\[k(\theta_n) \cong \frac{k(t)[z]}{(f^n(z) - t)}.\]
In fact, we can choose the sequence $\{\theta_n\}$ to be compatible, in the sense that $f(\theta_i) = \theta_{i-1}$ for each $i \geq 2$. Let $K_n$ denote the Galois closure of $k(\theta_n)$, i.e. the splitting field of $f^n(z) - t$. 

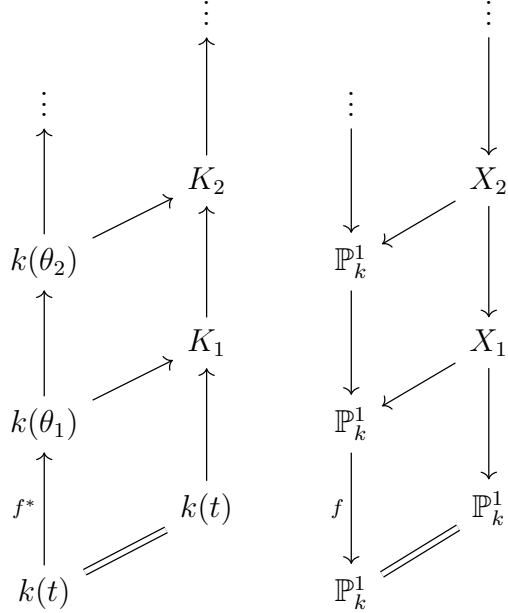
\begin{figure}[h]
\[\begin{tikzcd}[row sep = tiny]
 & \vdots   &  & \vdots \ar[dd]\\
\vdots &   &   \vdots \ar[dd] & \\
 & K_2 \ar[uu]   &   & X_2 \ar[dl] \ar[dd]\\
k(\theta_2) \ar[uu] \ar[ur] &    &  \P^1_k \ar[dd] & \\
 & K_1 \ar[uu]   &    & X_1 \ar[dd] \ar[dl]\\
k(\theta_1) \ar[uu] \ar[ur] &   &   \P^1_k \ar[dd, "f"'] &  \\
 & k(t) \ar[uu]   &   & \P^1_k \\
k(t) \ar[uu, "f^*"] \ar[ur, equal] &    &   \P^1_k \ar[ur, equal] &
\end{tikzcd}\]
\caption{Algebraic and Geometric Perspectives}
\label{fig:alggeoperspective}
\end{figure}

Figure \ref{fig:alggeoperspective} demonstrates the towers being considered from the perspective of function fields and algebraic curves. We let $X_i$ denote the integral proper normal curve over $k$ with function field $K_i$. The choice of primitive element $\theta_n \in k(t)^{sep}$ corresponds to a choice of coordinate on the domain of $f^n: \P^1_k \to \P^1_k$. Given a prime $\mf{p} \subset k(t)$ with corresponding point $P \in \P^1_k$, the discrete data of the factorization of $\mf{p}$ in $k(\theta_n)$ corresponds exactly to the discrete data of the fiber $f^{-n}(P)$. The monodromy group of $f^n$, denoted $\Mon_k(f^n)$, is canonically isomorphic to the Galois group of the cover $\Gal(X_n/\P^1_k)$. These are equally useful perspectives, and an anti-equivalence of categories allows us to move between them freely \cite[Proposition 4.4.5]{Szamuely09}.

Fix a prime ideal $\mf{p}\subset k(t)$. The Galois group $\Gal(K_n/k(t)) = \Mon_k(f^n)$ acts on the prime ideals of $K_n$ lying over $\mf{p}$. Fix such a prime $\mf{q}$ of $K_n$. The \textit{decomposition subgroup} $\D(\mf{q}/\mf{p})$ is the stabilizer of $\mf{q}$ under the action of $\Mon_k(f^n)$. Reduction modulo $\mf{p}$ induces a natural surjection of $\D(\mf{q}/\mf{p})$ onto the Galois group of the extension of residue fields $\Gal(\kappa(\mf{q})/\kappa(\mf{p}))$. The kernel of this surjection is the \textit{inertia subgroup} $\I(\mf{q}/\mf{p})$. When $\mf{p}$ is unramified in $K_n$ then each of the inertia groups at the primes lying over $\mf{p}$ are trivial \cite[Proposition 9.6]{Neukirch99}.  

Let $K_\infty = \bigcup_{n\geq 1}K_n$ be the smallest extension of $k(t)$ containing all of the sets $f^{-n}(t)$. As in the finite case, the Galois group $\IMG_k(f)$ acts on the prime ideals of $K_\infty$ lying over $\mf{p}$. Fix such a prime ideal $\tilde{\mf{q}}$. Let $\D(\tilde{\mf{q}}/\mf{p})$ denotes the stabilizer of $\tilde{\mf{q}}$ under this action, which we may call the (profinite) decomposition subgroup of $\tilde{\mf{q}}$ over $\mf{p}$. Elements of $\D(\tilde{\mf{q}}/\mf{p})$ naturally surject onto automorphisms of the extension of residue fields $\kappa(\tilde{\mf{q}})/\kappa(\mf{p})$. Let $\I(\tilde{\mf{q}}/\mf{p})$ denote the kernel of this surjection, what we may call the (profinite) inertia subgroup of $\tilde{\mf{q}}$ over $\mf{p}$. Suppose $\tilde{\mf{q}}$ corresponds to a sequence $\{\mf{q}_i\}$ of prime ideals $\mf{q}_i \subset K_i$. Recall the following isomorphism: 
\[\Gal(K_\infty/k(t)) \cong \varprojlim \Gal(K_n/k(t)).\]
One can use this fact to show that
$\D(\tilde{\mf{q}}/\mf{p}) \cong \varprojlim \D(\mf{q}_i/\mf{p})$
and $\I(\tilde{\mf{q}}/\mf{p}) \cong \varprojlim \I(\mf{q}_i/\mf{p})$.
In particular, if $\mf{p}$ is unramified in each extension $K_n$, then the inertia groups at the primes of $K_\infty$ lying over $\mf{p}$ are all trivial. In this way, the collection of iterates $\{f^n\}$ gives rise to a profinite Galois branched cover of $\P^1_k$ which is ramified over the post-critical orbit $P_f$.

\section{A Dynamical Lifting Problem}\label{section:dynamicallifts}

There is a well-established lifting problem that arises when studying Galois covers of algebraic curves. See Harbater et al. \cite[\S 9]{HarbaterEtAl18} for a broad overview of the literature and Obus \cite{Obus12} for an in-depth discussion of the results. We include the details necessary to motivate a dynamical analogue, restricting to the case of genus $0$ covers of $\P^1$ for the sake of brevity.

Let $k$ be a field of characteristic $p$. Let $R$ be a characteristic zero DVR with residue field $k$ and fraction field $K$. Let $f(z) \in k(z)$ be a rational map of degree $d \geq 2$ with $f'(z) \neq 0$, considered as a branched cover of $\P^1_k$. Fix a coordinate $t$ on $\P^1_k$ and $T$ on $\P^1_K$. Let $G = \Mon_k(f)$. We say $f$ admits a \textit{$G$-lift (over $R$)} if there exists $\tilde{f}(z) \in R(z)$ that satisfies the following:
\begin{enumerate}
\item the special fiber $\tilde{f} \times_R k \cong f$ as covers of $\P^1_k$;
\item the specialization map $\tilde{f}^{-1}(T) \to f^{-1}(t)$ induces an equivalence between the action of $\Mon_K(\tilde{f})$ on $\tilde{f}^{-1}(T)$ and the action of $\Mon_k(f)$ on $f^{-1}(t)$.
\end{enumerate}
Recall the actions of a group $G$ on a set $\Omega$ and $G'$ on $\Omega'$ are \textit{equivalent} if there exists a group isomorphism $\phi: G \to G'$ and a bijection $f: \Omega \to \Omega'$ such that
for all $g\in G$ and $\omega \in \Omega$ we have the following:
\[f(g*\omega) = \phi(g)*f(\omega).\]

Work in this area makes liberal use of a local-global principle known as formal patching, which is due to Harbater \cite{Harbater03}. It is known that $f$ admits a $G$-lift if and only if the restriction of the cover to each branch point $P$ admits an $I_P$-lift, where $I_P$ denotes the inertia group at any point of the fiber over $P$. A clear dynamical analogue of this result is unknown to the author, though one might expect the profinite nature of the objects involved to make the typical patching techniques unwieldy.

It is a noteworthy result of Grothendieck that tamely ramified Galois covers admit a $G$-lift over the Witt vector ring $W(k)$ \cite[Proposition 2.2]{Obus12}. A well-known conjecture of Oort, and now theorem of Obus, Wewers, and Pop, states that Galois covers admit a $G$-lift whenever all of their inertia groups are cyclic. This is a property enjoyed by all tamely ramified covers, as well as some well-behaved wildly ramified covers. These results can be extended to tame non-Galois covers, as the Galois closure of a tame cover is always tame \cite[Corollary 7.9]{Neukirch99}.

\begin{theorem}[Oort Conjecture]\cite{Obus-Wewers14, Pop14}
Let $\mf{p}_1, \dots, \mf{p}_r$ be the prime ideals corresponding to the branch points of $f$. Fix prime ideals $\mf{q}_i/\mf{p}_i$ in $K_1$, the splitting field of $f(z) - t$ over $k(t)$. Then $f$ admits a $G$-lift to characteristic zero whenever the inertia groups $\mathcal{I}(\mf{q}_i/\mf{p}_i)$ are cyclic for each $1 \leq i \leq r$.
\end{theorem}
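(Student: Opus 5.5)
The plan is to reduce the global statement to the local Oort conjecture via the local-global principle recalled above, and then to treat the local problem for cyclic groups. I would first pass from $f$ to its Galois closure: let $X_1 \to \P^1_k$ be the $G$-Galois cover with function field $K_1$, and let $H \subset G$ be the stabilizer of $\theta_1$. Suppose we can lift $X_1$ to a $G$-cover $\tilde{X}_1 \to \P^1_R$ over some finite extension $R$ of $W(k)$, flat over $R$ with $G$-action and special fiber $X_1$. Then the quotient $\tilde{X}_1/H$ is a smooth proper $R$-curve whose special fiber is $X_1/H \cong \P^1_k$. After enlarging $R$ so that it has an $R$-point, it is $\P^1_R$, and the induced map $\tilde{X}_1/H \to \P^1_R$ gives $\tilde{f}(z) \in R(z)$ with special fiber $f$.

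With this lift in hand, the second condition on $\tilde{f}$ follows quickly. The geometric generic fiber of $\tilde{X}_1$ is a $G$-torsor over the generic point. Specialization identifies $\tilde{f}^{-1}(T) = G/H$ with $f^{-1}(t) = G/H$ compatibly with the $G$-actions. Hence $\Mon_K(\tilde{f}) = G$ and the two actions are equivalent.

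So it suffices to lift the Galois cover $X_1 \to \P^1_k$, and for this I would use the following steps in order.

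\textbf{Step 1: Patching.} By Harbater's formal patching, $X_1$ admits a $G$-lift if and only if, for each branch point $P_i$ and chosen point over it, the local extension $k[[u]]/k[[u]]^{I_i}$ with $I_i = \mathcal{I}(\mf{q}_i/\mf{p}_i)$ lifts to an $I_i$-Galois extension of $R[[U]]$. Here $R$ is a suitable finite extension of $W(k)$, and we may assume $k$ is algebraically closed by base change. This reduces everything to the \emph{local} Oort conjecture for cyclic groups.

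\textbf{Step 2: Splitting off the tame part.} Write $I_i \cong \Z/p^n \times \Z/m$ with $p \nmid m$; cyclicity forces this splitting. The prime-to-$p$ part is handled by Grothendieck's tame lifting (Kummer theory). By a standard argument of Green–Matignon and Obus, it suffices to treat $\Z/p^n$.

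\textbf{Step 3: The cyclic $p$-group case.} Here I would follow Obus–Wewers. Describe the $\Z/p^n$-extension of $k[[u]]$ by its upper ramification jumps. Build the lift inductively over the tower $\Z/p \subset \Z/p^2 \subset \cdots$, constructing at each level a deformation of the characteristic-$p$ Artin–Schreier–Witt extension to a Kummer extension in characteristic zero. The obstruction is controlled by the degeneration data on the boundary of the rigid-analytic disc (Hurwitz trees / differential data). Obus–Wewers handle the case where the jumps satisfy an "essentially no-essential-ramification" condition. Pop's contribution then reduces the general case to theirs by deforming, in equal characteristic, the given extension to one whose jumps satisfy that condition. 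The liftability locus is closed in the deformation space, so the lift propagates.

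\textbf{Main obstacle.} I expect Step 3 to be the hard part: the construction of Kummer lifts with correct degeneration data for arbitrary jumps, and Pop's equicharacteristic deformation argument. Steps 1, 2 and the descent from Galois closure to $f$ are comparatively formal.
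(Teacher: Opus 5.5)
Your proposal is correct and follows the paper's route: the paper gives no argument of its own beyond citing Obus--Wewers and Pop, and your outline is exactly what that citation stands for, namely the local--global principle, reduction to the cyclic $p$-group case, and then Obus--Wewers together with Pop's equicharacteristic deformation. Your descent from the Galois closure via $\tilde{X}_1/H \to \tilde{X}_1/G$ makes explicit a step the paper leaves implicit, with two small caveats: the identification $(\tilde{X}_1/H)\times_R k \cong X_1/H$ needs a line of proof when $p$ divides $|H|$ (locally $R[[Z]]^{H_x} = R[[T]]$ with $T$ the norm of $Z$, by Weierstrass preparation), and the statement, like its source, should be read with $k$ algebraically closed, since base change to $\overline{k}$ replaces $\Mon_k(f)$ by the geometric monodromy group.
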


One can consider dynamical systems as infinite towers of covers of $\P^1_k$ given by iteration. In doing so, we arrive at a natural dynamical analogue of the above lifting problem for covers.

Let $f(z) \in k(z)$ be a polynomial of degree $d \geq 2$ such that $f'(z) \neq 0$. We say $f$ admits a \textit{geometric $\IMG$-lift (over $R$)} if there exists $\tilde{f}(z) \in R(z)$ that satisfies the following:
\begin{enumerate}
\item the special fiber $\tilde{f} \times_R k \cong f$ as dynamical systems;
\item the specialization map $\O_{\tilde{f}}^-(T) \to \O_f^-(t)$ induces an equivalence between the action of $\IMG_{\overline{K}}(\tilde{f})$ on $\O_{\tilde{f}}^-(T)$ and the action of $\IMG_{\overline{k}}(f)$ on $\O_f^-(t)$.
\end{enumerate}
Note that $\tilde{f} \times_R k \cong f$ implies $\tilde{f}^n \times_R k \cong f$ for all $n$, as reduction modulo the maximal ideal of $R$ commutes with the application of the rational map $\tilde{f}$.

\begin{remark}
Heuristically, the larger the degree of a rational map $f$, the more roots of unity $R$ must contain in order to admit a $\Mon_k(f)$-lift over $R$. This is why we consider lifts which preserve the action over an algebraic closure of $K$, as we are requiring lifts of rational maps of arbitrarily large degree.
\end{remark}

Given a rational map $f(z) \in k(z)$, $G$-lifts and geometric $\IMG$-lifts are related in the following way: $f$ admits a geometric $\IMG$-lift over $R$ if and only if there exists a sequence of rational maps $g_n(z) \in R(z)$ and a single $\varphi \in \Aut(\P^1_{\overline{K}})$ satisfying the following:
\begin{enumerate}
\item $g_n$ is a $\Mon_{\overline{k}}(f^n)$-lift of $f^n$ over $R$;
\item $g_n^\varphi = g_1^n$.
\end{enumerate}
In particular, the existence of a geometric $\IMG$-lift of $f$ implies the existence of a $\Mon_{\overline{k}}(f^n)$-lift of each iterate $f^n$. A priori, the converse is not true. For a geometric $\IMG$-lift to exist, there must exist a sequence of $\Mon_{\overline{k}}(f^n)$-lifts which is compatible, in the sense that it is linearly conjugate via a uniform change of variables to one given by iteration.

Optimistically, one might expect that $\IMG$-lifts exist whenever each iterate $f^n$ admits a $\Mon_{\overline{k}}(f^n)$-lift. By the Oort conjecture, every $f^n$ admits a $\Mon_{\overline{k}}(f^n)$-lift whenever all the inertia groups of each iterate $f^n$ are cyclic. We propose the following conjecture which reformulates this observation in terms of the profinite inertia groups defined in Section \ref{section:profiniteinertia}.

\begin{conjecture}[Dynamical Oort Conjecture]
Let $k$ be a perfect field of positive characteristic. Let $f(z) \in k(z)$ be a PCF rational map of degree $d \geq 2$ satisfying $f'(z) \neq 0$. Let $\mf{p}_1, \dots, \mf{p}_r$ denote the prime ideals of $\kbar(t)$ corresponding to the post-critical points of $f$. For each $1\leq i \leq r$, fix a prime ideal $\tilde{\mf{q}}_i$ of $\kbar \cdot K_\infty$ lying over $\mf{p}_i$. If all of the inertia groups $\mathcal{I}(\tilde{\mf{q}}_i/\mf{p}_i)$ are pro-cyclic then $f$ admits a lift to characteristic zero which preserves its geometric iterated monodromy group.
\end{conjecture}

A positive answer to Conjecture \ref{conj:dynamicaloort} for the family of tamely-ramified quadratic maps was given by Pink \cite{Pink13-Lifting}. In \cite{AdamsHyde25}, Adams and Hyde compute the geometric iterated monodromy group of every tamely ramified PCF unicritical polynomial, and find it depends on the structure of the post-critical orbit as well as some arithmetic data. Thus, in cases where reduction preserves the structure of the post-critical orbit, their paper provides a lift which preserves the geometric iterated monodromy group. Many computations of geometric iterated monodromy groups are stated this way, providing lifts for tamely ramified maps wherever reduction preserves the combinatorial data of the map. See for example \cite{Ejder26, Juul19}.

\section{Dynamics of Additive and Semi-Additive Polynomials}\label{section:additivepolys}

Suppose $f \in \overline{\F}_p[z]$ is additive. Then $f$ is of the form
\[f(z) = \sum_{i=0}^m a_i z^{p^i}\]
for some $a_i \in \overline{\F}_p$ \cite[Proposition 1.1.5]{Goss96}. Note that $f'(z) = a_0$. In particular, $f$ is separable if and only if $a_0 \neq 0$. One can use Fermat's Little Theorem, along with the standard form for additive polynomials, to show that $\lambda f(z) = f(\lambda z)$ for all $\lambda \in \F_p$.

\begin{lemma}
Suppose $f(z) \in \overline{\F}_p[z]$ is additive and separable. Then the following hold for each $n \geq 1$:
\begin{enumerate}
\item $f^n(z)$ is additive.
\item $f^n(z)$ is separable.
\end{enumerate}
\end{lemma}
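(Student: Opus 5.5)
We argue by induction on $n$, using the characterization of additivity directly rather than the explicit form of the coefficients. The case $n=1$ is the hypothesis. For the inductive step we write $f^{n+1} = f \circ f^n$ and assume that $f^n$ is additive and separable.

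For part (1), fix $a,b \in \overline{\F}_p$. Additivity of $f^n$ gives $f^n(a+b) = f^n(a) + f^n(b)$. Applying $f$ and using its additivity at the points $f^n(a), f^n(b)$ yields
\[f^{n+1}(a+b) = f\bigl(f^n(a)+f^n(b)\bigr) = f^{n+1}(a) + f^{n+1}(b).\]
Hence $f^{n+1}$ is additive. This is purely formal, since a composition of additive maps is additive.

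For part (2), recall from the discussion above that an additive polynomial $\sum a_i z^{p^i}$ has derivative equal to its linear coefficient, and that it is separable exactly when this coefficient is nonzero. By part (1) and \cite[Proposition 1.1.5]{Goss96}, $f^{n+1}$ has this standard form, so it suffices to show $(f^{n+1})'(z) \neq 0$. The chain rule gives $(f^{n+1})'(z) = f'(f^n(z)) \cdot (f^n)'(z)$. Here $f' = a_0 \neq 0$ is a nonzero constant, and by induction $(f^n)'$ is a nonzero constant $b_0$. Therefore $(f^{n+1})' = a_0 b_0 \neq 0$, and in fact $(f^n)' = a_0^n$.

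A subtlety is that ``separable'' here means the polynomial has distinct roots, equivalently it has no repeated factor. For an additive polynomial $g$ with $g' = c \neq 0$ constant, $\gcd(g, g') = 1$, so $g$ is separable. This one-line justification is the only point requiring care, and it is not a real obstacle. Alternatively, one can note that the linear coefficient of a composition of $p$-polynomials is the product of the linear coefficients.
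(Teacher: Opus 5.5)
Your proof is correct and follows essentially the same route as the paper: additivity of the iterates because additive polynomials are closed under composition, and separability by induction via the chain rule, giving $(f^n)'(z) = a_0^n \neq 0$. Your remark that a nonzero constant derivative forces $\gcd(g, g') = 1$ simply makes explicit a step the paper leaves implicit.
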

\begin{proof}
Additivity follows from the more general fact that the collection of additive polynomials over $\overline{\F}_p$ forms a ring under addition and composition.
We prove separability by induction on $n$. Suppose $(f^{n-1})'(z) = a_0^{n-1}$. Using the chain rule, we have
\begin{align*}
(f^n)'(z) &= (f^{n-1})'(f(z))\cdot f'(z) \\
&= a_0^{n-1} \cdot a_0 \\
&= a_0^n.
\end{align*}
In particular, since $a_0 \neq 0$ we have $a_0^n \neq 0$. I.e, $f^n$ is separable. 
\end{proof}

We can leverage the additivity property to compute the geometric IMG of any additive, separable polynomial.

\begin{theorem}
Let $f(z) \in \overline{\F}_p[z]$ be an additive, separable polynomial of degree $p^m$. For each $n\geq 1$, we have an isomorphism
\[\Mon_{\overline{\F}_p}(f^n) \cong (\Z/p\Z)^{mn}.\]
The geometric iterated monodromy group then has the form
\[\IMG_{\overline{\F}_p}(f) \cong \varprojlim_{n\geq 1}(\Z/p\Z)^{mn}.\]
\end{theorem}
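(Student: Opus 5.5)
By the preceding Lemma, each iterate $f^n$ is additive and separable, and it has degree $p^{mn}$. It therefore suffices to prove the following: if $F(z)\in\overline{\F}_p[z]$ is additive and separable of degree $p^N$, then $\Mon_{\overline{\F}_p}(F)\cong(\Z/p\Z)^N$. Applying this with $F=f^n$ and $N=mn$ gives the first claim.

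\textbf{Roots of $F$.} Since $F'(z)=a_0\neq 0$ is a nonzero constant, $F$ has $p^N$ distinct roots in $\overline{\F}_p$. Call this set $V=\ker F$. Because $F$ is additive, $V$ is closed under addition. Every element of $\overline{\F}_p$ has additive order $p$, so $V$ is an $\F_p$-vector space with $|V|=p^N$. Hence $V\cong(\Z/p\Z)^N$.

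\textbf{Splitting field.} Let $\theta$ be one root of $F(z)-t$ over $\overline{\F}_p(t)$. For every $v\in V$, additivity gives $F(\theta+v)=F(\theta)+F(v)=t$. So the $p^N$ distinct elements $\theta+v$ are exactly the roots of $F(z)-t$, since that polynomial has degree $p^N$. The splitting field $K$ is therefore $\overline{\F}_p(t)(\theta)=\overline{\F}_p(\theta)$. Because $F(z)-t$ is irreducible over $\overline{\F}_p(t)$ (it is linear in $t$), we have $[K:\overline{\F}_p(t)]=p^N$. All elements of $V$ lie in $\overline{\F}_p\subset \overline{\F}_p(t)$.

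\textbf{The Galois group.} For each $v\in V$ there is a unique automorphism $\sigma_v$ of $K$ over $\overline{\F}_p(t)$ with $\sigma_v(\theta)=\theta+v$. Then $\sigma_v\sigma_w(\theta)=\sigma_v(\theta+w)=\theta+v+w$, where we use that $w$ is fixed. So $v\mapsto\sigma_v$ is an injective homomorphism $V\to\Gal(K/\overline{\F}_p(t))$. Both groups have order $p^N$, so this map is an isomorphism. Under it, the action of $\Mon$ on the roots is the translation action of $V$ on $\theta+V$.

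\textbf{The inverse limit.} This follows from the isomorphism $\IMG\cong\varprojlim\Mon(f^n)$ recalled in Section~\ref{section:background}. To make the transition maps explicit, choose compatible $\theta_n$ with $f(\theta_n)=\theta_{n-1}$. Restricting $\sigma_v$ from $K_n$ to $K_{n-1}$ sends $\theta_{n-1}=f(\theta_n)\mapsto f(\theta_n+v)=\theta_{n-1}+f(v)$. So the transition map is $f|_{\ker f^n}:\ker f^n\to\ker f^{n-1}$. This map is surjective, since $f$ is surjective on $\overline{\F}_p$ and $f(x)\in\ker f^{n-1}$ forces $x\in\ker f^n$. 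It is $\F_p$-linear with kernel $\ker f\cong(\Z/p\Z)^m$. This gives the stated form $\varprojlim(\Z/p\Z)^{mn}$.

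The one point needing care is the $\F_p$-linearity of the transition maps. It holds because $f(\lambda z)=\lambda f(z)$ for $\lambda\in\F_p$, as noted earlier in this section. I do not expect any step to be a real obstacle; the crucial input is simply that $F(z)-t$ splits over $\overline{\F}_p(\theta)$ by translation.
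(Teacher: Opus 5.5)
Your proposal is correct and follows essentially the same route as the paper: the roots of $f^n$ form an $\F_p$-vector space of dimension $mn$ that acts by translation on the fiber $f^{-n}(t)$. Hence $K_n=\overline{\F}_p(\theta_n)$, and $\alpha\mapsto(\theta_n\mapsto\theta_n+\alpha)$ is an isomorphism onto $\Mon_{\overline{\F}_p}(f^n)$. You also fill in details the paper leaves implicit: the irreducibility of $f^n(z)-t$ giving $[K_n:\overline{\F}_p(t)]=p^{mn}$, and the explicit surjective transition maps $f|_{\ker f^n}\colon \ker f^n\to\ker f^{n-1}$, which give the inverse system.
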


\begin{proof}
Let $n \geq 1$. Since $f^n$ is additive and commutes with scalar multiplication by $\F_p$, the roots of $f^n$ form an $\F_p$-vector subspace of $\overline{\F}_p$. Let $Z_n$ denote the set of roots of $f^n$ in $\overline{\F}_p$. Since $f^n$ is separable, we have that $|Z_n| = (p^m)^n$. In particular, $Z_n$ is isomorphic as an $\F_p$-vector space to $\F_p^{mn}$. 

We can fix $\theta_n \in \overline{\F_p(t)}$ such that
\[\overline{\F}_p(\theta_n) \cong \frac{\overline{\F}_p(t)[z]}{(f^n(z) - t)}.\]
For any $\alpha \in Z_n$, we can compute that
\[f^n(\theta_n + \alpha) = f^n(\theta_n) + f^n(\alpha) = t.\]
Since $f^n(z) - t$ is separable, the fiber $f^{-n}(t)$ has order $p^{mn}$. Thus, $f^{-n}(t)$ has the form
\[f^{-n}(t) = \{\theta_n + \alpha : \alpha \in Z_n\}.\]
In particular, $\overline{\F}_p(\theta_n) = K_n$ is the splitting field of $f^n(z) - t$. That is to say, each iterate $f^n$ is Galois as a cover of $\P^1_{\overline{\F}_p}$. Moreover, $Z_n$ maps isomorphically onto $\Mon_{\overline{\F}_p}(f^n) = \Gal(K_n/\overline{\F}_p(t))$ via the map
$\alpha \mapsto (\theta \mapsto \theta + \alpha)$.
\end{proof}

The geometric $\IMG$ of an additive polynomial is unique to positive characteristic, in the sense that it is not liftable to characteristic zero.

\begin{theorem}\label{thm:noadditivelift}
Let $f(z) \in \overline{\F}_p[z]$ be an additive, separable polynomial of degree $p^m$. Then $f$ does not admit a geometric $\IMG$-lift over any characteristic zero DVR.
\end{theorem}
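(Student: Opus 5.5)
We argue by contradiction, working already at the first level $n=1$. A geometric $\IMG$-lift of $f$ over a characteristic zero DVR $R$ gives, by the discussion following the definition of geometric $\IMG$-lifts in Section~\ref{section:dynamicallifts}, a $\Mon_{\overline{\F}_p}(f)$-lift $\tilde{f}(z) \in R(z)$ of $f$. By the preceding theorem, $G = \Mon_{\overline{\F}_p}(f) \cong (\Z/p\Z)^m$ and $f$ is itself Galois as a cover of $\P^1_{\overline{\F}_p}$. Since the action of $G$ on $f^{-1}(t)$ is simply transitive, condition (2) of a lift forces $\Mon_{\overline{K}}(\tilde{f})$ to act simply transitively on $\tilde{f}^{-1}(T)$. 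Hence $\tilde{f}:\P^1_{\overline{K}} \to \P^1_{\overline{K}}$ is a Galois cover of degree $p^m$ whose Galois group is isomorphic to $(\Z/p\Z)^m$.

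The key step is to identify this Galois group with a subgroup of automorphisms of the source curve. Since $\tilde{f}$ is Galois, the deck transformation group of $\overline{K}(\theta)/\overline{K}(T)$ is the full Galois group. The source field is $\overline{K}(\theta)$, a rational function field, so $(\Z/p\Z)^m$ embeds in $\Aut(\P^1_{\overline{K}}) \cong \mathrm{PGL}_2(\overline{K})$, with $\overline{K}$ of characteristic zero.

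We then invoke the classical classification of finite subgroups of $\mathrm{PGL}_2$ in characteristic zero (Beauville \cite{Beauville10}): they are cyclic, dihedral, $A_4$, $S_4$ or $A_5$. Elementary abelian $p$-groups among these are only $\Z/p\Z$ and $(\Z/2\Z)^2$, the Klein four subgroup of a dihedral group. So for $m=1$, or for $p=2$ and $m=2$, this first-level argument does not yet yield a contradiction.

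The main obstacle is handling these small cases, for which we pass to higher iterates. A geometric $\IMG$-lift also furnishes a $\Mon_{\overline{\F}_p}(f^n)$-lift $g_n$ of $f^n$ for every $n$, with $\Mon_{\overline{\F}_p}(f^n) \cong (\Z/p\Z)^{mn}$ again acting simply transitively. The same argument makes $g_n$ Galois with group $(\Z/p\Z)^{mn}$ embedded in $\mathrm{PGL}_2(\overline{K})$. Choosing $n$ with $mn \geq 3$ gives an elementary abelian group of rank at least $3$ (rank at least $2$ when $p$ is odd), which does not occur in the list above. This contradiction shows no geometric $\IMG$-lift exists over any characteristic zero DVR.

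One point to verify carefully is that a $\Mon$-lift preserves the Galois property, that is, that the splitting field of $\tilde{f}(z)-T$ equals $\overline{K}(\theta)$. This follows because $|\Mon_{\overline{K}}(\tilde{f})| = |G| = \deg \tilde{f}$.
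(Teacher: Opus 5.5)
Your proposal is correct and follows the paper's own argument: a lift must make each iterate Galois with group $(\Z/p\Z)^{mn}$ inside $\Aut(\P^1_{\overline{K}})$, which contradicts Beauville's classification once $n$ is large enough to avoid the exceptional cases $m=1$ and $(p,m)=(2,2)$. Your explicit choice of $n$ with $mn \geq 3$, and your order count showing the lift stays Galois, simply spell out details the paper leaves implicit.
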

\begin{proof}
Each iterate $f^n$ is Galois as a cover of $\P^1_{\overline{\F}_p}$, so that the monodromy group $\Mon_{\overline{\F}_P}(f^n)$ is a subgroup of $\Aut(\P^1_{\overline{\F}_p})$. Let $K$ be a local field of characteristic zero with algebraic closure $\overline{K}$. Any $\Mon_{\overline{\F}_p}(f^n)$-lift of $f^n$ to $K$ must also be Galois as a cover of $\P^1_{\overline{K}}$, requiring $\Mon_{\overline{\F}_p}(f^n)$ to be a subgroup of $\Aut(\P^1_{\overline{K}})$ as well. However, $(\Z/p\Z)^m$ is not a subgroup of $\Aut(\P^1_{\overline{K}})$ away from $m = 1$ or $(p,m) = (2,2)$ \cite{Beauville10}. So, in all cases, there exists an iterate $f^n$ which does not admit a $\Mon_{\overline{\F}_p}(f^n)$-lift to $K$ preventing a geometric $\IMG$-lift.
\end{proof}

As we have seen, additive polynomials over $\overline{\F}_p$ have $p$-elementary abelian monodromy groups. To realize $p$-semi-elementary monodromy groups, i.e. the direct products of a $p$-elementary abelian group with a cyclic group of order coprime to $p$, we consider the collection of \textit{semi-additive} polynomials. Let $f(z) \in \overline{\F}_p[z]$ be an additive, separable polynomial. Given $n \geq 1$ with $p\nmid n$, we can form the \textit{semi-additive} polynomial $g(z) = z^n \circ f(z)$. Given certain divisibility conditions on the powers of $p$ arising in $f$, we can compute the monodromy groups of these semi-additive polynomials.

\begin{theorem}\label{thm:semiadditivemonodromy}
Let $n\geq 1$ such that $p\nmid n$. Let $e$ denote the order of $p$ in $(\Z/n\Z)^\times$. Fix $\ell \geq 1$ and let $f(z) \in \overline{\F}_p[z]$ be an additive, separable polynomial of the form
\[f(z) = \sum_{i=0}^\ell a_i z^{p^{ie}}.\]
Let $g(z) = z^n \circ f(z)$. Then $\Mon_{\overline{\F}_p}(g) \cong (\Z/p\Z)^{\ell e} \times \Z/n\Z$.
\end{theorem}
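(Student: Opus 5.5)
The plan is to mimic the proof for additive polynomials: write down the fiber $g^{-1}(t)$ explicitly and read off the Galois group as a group of affine substitutions. The extra input is the divisibility hypothesis on the exponents of $f$, which makes $f$ commute with multiplication by $n$-th roots of unity.

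\textbf{Step 1 (compatibility with $\mu_n$).} Since $e$ is the order of $p$ in $(\Z/n\Z)^\times$, every $\zeta \in \mu_n(\overline{\F}_p)$ satisfies $\zeta^{p^{ie}} = \zeta$ for all $i$. Hence
\[f(\zeta z) = \sum_{i=0}^{\ell} a_i \zeta^{p^{ie}} z^{p^{ie}} = \zeta f(z).\]
Let $Z_1$ be the root set of $f$. As in the proof of the additive case, $Z_1$ is an $\F_p$-vector space of order $p^{\ell e}$, so $Z_1 \cong (\Z/p\Z)^{\ell e}$. By the identity above, $Z_1$ is also stable under multiplication by $\mu_n$. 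Note $|\mu_n| = n$ because $p \nmid n$.

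\textbf{Step 2 (the fiber).} Fix $\theta$ with $g(\theta) = f(\theta)^n = t$. For $\zeta \in \mu_n$ and $\alpha \in Z_1$, additivity and Step 1 give
\[g(\zeta\theta + \alpha) = \bigl(\zeta f(\theta) + f(\alpha)\bigr)^n = \zeta^n f(\theta)^n = t.\]
The polynomial $g(z) - t$ is separable of degree $p^{\ell e} n$. I would check that the elements $\zeta\theta + \alpha$ are pairwise distinct, using that $f(\zeta\theta + \alpha) = \zeta f(\theta)$ separates different $\zeta$. Then
\[g^{-1}(t) = \{\zeta\theta + \alpha : \zeta \in \mu_n,\ \alpha \in Z_1\}.\]
Consequently $\overline{\F}_p(\theta)$ is already the splitting field, so $g$ is Galois and $|\Mon_{\overline{\F}_p}(g)| = p^{\ell e} n$.

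\textbf{Step 3 (the group).} Each Galois element $\sigma$ is determined by $\sigma(\theta) = \zeta_\sigma \theta + \alpha_\sigma$. Define $\sigma \mapsto (\alpha_\sigma, \zeta_\sigma)$. This gives a bijection between $\Mon_{\overline{\F}_p}(g)$ and $Z_1 \times \mu_n \cong (\Z/p\Z)^{\ell e} \times \Z/n\Z$.

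The translations $\theta \mapsto \theta + \alpha$ form the subgroup fixing $f(\theta)$, that is, $\Gal(\overline{\F}_p(\theta)/\overline{\F}_p(f(\theta)))$. By the additive-case theorem this subgroup is $(\Z/p\Z)^{\ell e}$, and it is normal. It is the kernel of the homomorphism $\sigma \mapsto \zeta_\sigma$, which is the restriction to the Kummer extension $\overline{\F}_p(f(\theta))/\overline{\F}_p(t)$ with group $\Z/n\Z$. The scalings $\theta \mapsto \zeta\theta$ give a cyclic complement meeting the translations trivially. So $\Mon_{\overline{\F}_p}(g)$ is generated by these two subgroups, with orders multiplying correctly, which yields the decomposition $(\Z/p\Z)^{\ell e} \times \Z/n\Z$ in the statement.

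\textbf{Main obstacle.} The delicate step is the final identification of the group law. Composing $\theta \mapsto \zeta\theta + \alpha$ with $\theta \mapsto \zeta'\theta + \alpha'$ gives $\theta \mapsto \zeta\zeta'\theta + (\zeta\alpha' + \alpha)$. So the interaction between the two factors is governed by the $\mu_n$-action on $Z_1$ from Step 1, and I would scrutinize that action carefully to pin down exactly how the two factors combine. Steps 1 and 2 I expect to be routine.
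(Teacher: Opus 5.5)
Your Steps 1 and 2 are correct and match the paper's argument. The fiber is $\{\zeta\theta+\alpha\}$, so $\overline{\F}_p(\theta)$ is the splitting field and $\Mon_{\overline{\F}_p}(g)$ consists of the affine substitutions $\theta\mapsto\zeta\theta+\alpha$.

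The gap is the last inference of Step 3. A normal subgroup $N\cong Z_1$ with a complement $H\cong\mu_n$ and $|N|\,|H|=|G|$ only gives $G\cong N\rtimes H$. A direct product also needs $H$ to be normal, i.e.\ scalings must commute with translations. They do not: conjugating $\sigma_\zeta\colon\theta\mapsto\zeta\theta$ by $\tau_\alpha\colon\theta\mapsto\theta+\alpha$ gives $\theta\mapsto\zeta\theta+(\zeta-1)\alpha$, which is not a scaling when $\zeta\neq1$ and $\alpha\neq0$. So the ``obstacle'' you flagged is fatal, not merely delicate. $\mu_n$ acts on $Z_1$ by scalar multiplication, which fixes no nonzero vector. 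Hence for $n\ge2$ the group $Z_1\rtimes\mu_n$ is non-abelian (indeed it has trivial center) and cannot be isomorphic to the abelian group $(\Z/p\Z)^{\ell e}\times\Z/n\Z$. The smallest instance is $p=3$, $n=2$ (so $e=1$), $\ell=1$, $f(z)=z^3-z$. With $\sigma(\theta)=-\theta$ and $\tau(\theta)=\theta+1$ one gets $\sigma\tau(\theta)=-\theta+1\neq-\theta-1=\tau\sigma(\theta)$, so $\Mon_{\overline{\F}_3}(g)\cong S_3\not\cong\Z/6\Z$.

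Hence no argument can complete Step 3: the statement as written holds only for $n=1$. What your Steps 1--3 actually prove is $\Mon_{\overline{\F}_p}(g)\cong(\Z/p\Z)^{\ell e}\rtimes\Z/n\Z$, with $\Z/n\Z\cong\mu_n\subset\F_{p^e}^\times$ acting on $Z_1$ by multiplication. The paper's own proof has the identical defect. It writes down the bijection $(\alpha,i)\mapsto(\theta\mapsto\zeta_n^i\theta+\alpha)$ from $Z\times\Z/n\Z$ and calls it an isomorphism, but that map is not a homomorphism for the direct-product law. (The paper's later appeal to a ``dihedral case'' $(m,n)=(1,2)$ is consistent only with the semidirect product, since $\Z/p\Z\rtimes\Z/2\Z$ is dihedral whereas $\Z/p\Z\times\Z/2\Z$ is cyclic.)
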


\begin{proof}
Fix $\theta \in \overline{\F_p(t)}$ such that
\[\overline{\F}_p(\theta) \cong \frac{\overline{\F}_p(t)[z]}{(g(z) - t)}.\]

Let $Z$ denote the set of roots of $f$ in $\overline{\F}_p$. Since $f$ is separable of degree $p^{\ell e}$, we know $Z$ is isomorphic as an $\F_p$-vector space to $\F_p^{\ell e}$. For any $\alpha \in Z$, we compute that
\[f(\theta + \alpha) = f(\theta) + f(\alpha) = f(\theta).\]
In particular, passing through the power map $z^n$ gives
\[g(\theta + \alpha) = g(\theta) = t.\]
Let $\zeta_n \in \overline{\F}_p$ be a primitive $n$-th root of unity. Recall that $p^e \equiv 1 \mod{n}$. Fix $k \in \Z$ such that $nk = p^e - 1$. We can compute
\[\zeta_n^{p^e} = (\zeta_n^n)^k \cdot \zeta_n = \zeta_n.\]
By construction, for any $0 \leq i < n$, we have
$f(\zeta_n^i z) = \zeta_n^i f(z)$.
Thus, $g(\zeta_n^i\theta) = g(\theta) = t$ for any $0 \leq i < n$.

Since $g(z) - t$ is separable, the fiber $g^{-1}(t)$ has cardinality $p^{\ell e}n$. So, we have shown that
\[g^{-1}(t) = \{\zeta_n^i \theta + \alpha : 0 \leq i < n,\ \alpha \in Z\}.\]
It follows that $\overline{\F}_p(\theta)$ is the splitting field of $g(z) - t$ and $g$ is Galois as a cover of $\P^1_{\overline{\F}_p}$. Moreover, $Z \times \Z/n\Z$ maps isomorphically onto $\Mon_{\overline{\F}_p}(g)$ via the map $(\alpha, i) \mapsto (\theta \mapsto \zeta_n^i \theta + \alpha)$.
\end{proof}

We will refer to a semi-additive polynomial of the form constructed in Theorem \ref{thm:semiadditivemonodromy} as being \textit{$e$-divisible}.

\begin{theorem}
Let $g(z) = z^n \circ f(z) \in \overline{\F}_p[z]$ be an $e$-divisible semi-additive polynomial of degree $p^{\ell e} n$. If $m > 0$ and $(m,n) \neq (1,2)$ then $g$ does not admit a geometric $\IMG$-lift over any characteristic zero DVR.
\end{theorem}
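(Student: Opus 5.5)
The plan is to reduce to the argument of Theorem \ref{thm:noadditivelift}, using the explicit Galois structure from Theorem \ref{thm:semiadditivemonodromy}. I read $m$ in the statement as the rank $\ell e$ of the $p$-part, so $\Mon_{\overline{\F}_p}(g) \cong (\Z/p\Z)^{m} \times \Z/n\Z$. I have not confirmed that this is the author's meaning, and the rest of the plan depends on it.

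\emph{Step 1 (monodromy lands in $\Aut(\P^1)$).} By Theorem \ref{thm:semiadditivemonodromy}, $g$ is Galois as a cover of $\P^1_{\overline{\F}_p}$, and its monodromy group acts by the M\"obius maps $\theta \mapsto \zeta_n^i\theta + \alpha$. So $G = \Mon_{\overline{\F}_p}(g)$ is a finite abelian subgroup of $\Aut(\P^1_{\overline{\F}_p})$.

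\emph{Step 2 (a lift would have to be Galois).} Suppose $g$ had a geometric $\IMG$-lift $\tilde g$ over some characteristic zero DVR $R$ with fraction field $K$. By the equivalence recorded in Section \ref{section:dynamicallifts}, $\tilde g$ would in particular be a $\Mon_{\overline{\F}_p}(g)$-lift of $g$. Condition (2) of that definition identifies the action of $\Mon_{\overline K}(\tilde g)$ on $\tilde g^{-1}(T)$ with the regular action of $G$ on $g^{-1}(t)$. Hence $\tilde g$ is Galois of degree $|G|$, its deck group is isomorphic to $G$, and $G$ embeds into $\Aut(\P^1_{\overline K}) = \mathrm{PGL}_2(\overline K)$.

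\emph{Step 3 (group theory in characteristic zero).} By the classification recalled from \cite{Beauville10}, the finite subgroups of $\mathrm{PGL}_2(\overline K)$ are cyclic, dihedral, $A_4$, $S_4$ or $A_5$. Their abelian members are therefore only the cyclic groups and the Klein four group $(\Z/2\Z)^2$. Since $p \nmid n$, the group $(\Z/p\Z)^m \times \Z/n\Z$ is cyclic exactly when $m \le 1$. It is isomorphic to $(\Z/2\Z)^2$ exactly when $p=2$, $m=2$ and $n=1$, which is a purely additive polynomial. So for $m \ge 2$, outside that single exception, we get a contradiction, and there is no lift even of $g$ itself.

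\emph{Step 4 (the expected obstacle: $m = 1$, $n \neq 2$).} Here $G \cong \Z/pn\Z$ is cyclic, so $g$ alone does lift by the Oort conjecture, and I must pass to an iterate. I would first try to show that some iterate $g^k$, or a Galois subcover of its Galois closure, has a non-cyclic abelian group as a quotient or subgroup realized by M\"obius maps. The candidates to examine are the translations by roots of $f \circ (z^n \circ f)$, and the group generated by $\theta_2 \mapsto \theta_2 + \alpha$ together with the maps lifting $\theta_1 \mapsto \zeta_n\theta_1 + \beta$. The goal is to exhibit $(\Z/p\Z)^2$, or an abelian non-cyclic group, inside $\Mon_{\overline{\F}_p}(g^2)$ acting through $\Aut(\P^1)$ on an intermediate genus-$0$ cover. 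Then the same $\mathrm{PGL}_2(\overline K)$ argument applied to the compatible lift $g_2$ of $g^2$ gives the contradiction.

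I expect this iterate computation to be the main difficulty. The second iterate of a semi-additive map is no longer of the form $z^n \circ(\text{additive})$, so Galoisness must be checked by hand. The excluded case $(m,n)=(1,2)$ presumably marks where the relevant group degenerates to a dihedral or cyclic group that does embed in characteristic zero. Controlling exactly which subgroup survives is the step I would work out first, checking small cases such as $p=3$, $n=2$ against $p=3$, $n=4$.
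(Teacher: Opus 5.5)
Your reading $m=\ell e$ is the intended one, and Steps 2--3 are exactly the paper's argument. A $\Mon_{\overline{\F}_p}(g)$-lift of a Galois cover must itself be Galois, so the monodromy group would have to embed in $\Aut(\P^1_{\overline K})=\mathrm{PGL}_2(\overline K)$, which Beauville's list forbids already at the first level.

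\textbf{The gap is in Step 1: $G$ is not abelian.} You took the ``$\times$'' in Theorem~\ref{thm:semiadditivemonodromy} literally, but the M\"obius maps you wrote down do not commute. With $\sigma(\theta)=\zeta_n\theta$ and $\tau_\alpha(\theta)=\theta+\alpha$ one has $\sigma\tau_\alpha\sigma^{-1}=\tau_{\zeta_n\alpha}$. Hence $G\cong Z\rtimes\mu_n$, with $\mu_n$ acting on $Z$ by multiplication, fixed-point-freely on $Z\setminus\{0\}$ when $n>1$. The paper's proof, which calls $(m,n)=(1,2)$ ``the dihedral case'', is implicitly using this semidirect structure.

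For $m=\ell e=1$ you get $\ell=e=1$, so $n\mid p-1$ and $\zeta_n\in\F_p^\times$. Then $G\cong\F_p\rtimes\mu_n$ is a Frobenius group of order $pn$, not $\Z/pn\Z$.
\begin{itemize}
\item For $n=2$ it is the dihedral group $D_p$, which does embed in characteristic zero. That is the reason for the exclusion. In your abelian reading $\Z/2p\Z$ would be no different from $\Z/pn\Z$, and the exclusion would be inexplicable.
\item For $n\ge 3$ it does not embed. An element of $\mathrm{PGL}_2(\overline K)$ normalizing a nontrivial finite cyclic subgroup permutes that subgroup's two fixed points. So the image of $\sigma^2$ would have to centralize the image of $Z$, forcing $\zeta_n^2=1$.
\end{itemize}
Thus for $m=1$, $n\ge 3$, your Steps 2--3 already give the contradiction at level one. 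Your claim that $g$ lifts by the Oort conjecture is false: the inertia group at $\infty$ is all of $G$, which is not cyclic, and no $\Mon_{\overline{\F}_p}(g)$-lift exists. Step 4 is aimed at an obstacle that does not exist, and as written it is only a plan. So your proposal does not actually prove the cases $m=1$, $n\ge 3$. For $n=1$, including your exception $(p,m,n)=(2,2,1)$, $g$ is additive and Theorem~\ref{thm:noadditivelift} applies directly.

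\textbf{A remaining case, even after correcting the group.} For $m\ge 2$ the level-one argument works because $G\supseteq Z\cong(\Z/p\Z)^m$, except when $(p,m)=(2,2)$. Besides $n=1$, this exception also occurs for $p=2$, $n=3$, $e=2$, $\ell=1$. There $G\cong\F_4\rtimes\F_4^\times\cong A_4$, which is the tetrahedral subgroup of $\mathrm{PGL}_2(\overline K)$. Your Step 3 conclusion fails in this case. The paper's one-line proof, which names only the dihedral exception, overlooks it as well. The embedding argument at level one cannot settle this case.
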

\begin{proof}
When $m > 0$, away from the dihedral case $(m,n) = (1,2)$, the group $(\Z/p\Z)^m \times \Z/n\Z$ is not a subgroup of $\Aut(\P^1_{\overline{K}})$ for a field of characteristic zero $K$ \cite{Beauville10}. Failure to admit a geometric $\IMG$-lift follows from the same argument used in the proof of Theorem \ref{thm:noadditivelift}.
\end{proof}

In \cite{Faber23}, Faber establishes that the only finite subgroups of $\Aut(\P^1_{\overline{\F}_p})$ which do not arise in characteristic zero are the $p$-semi-elementary-abelian groups, as well as $\mathrm{PGL}_2(\F_q)$ and $\mathrm{PSL}_2(\F_q)$ for $q = p^\ell$. The groups $\mathrm{PGL}_2(\F_q)$ and $\mathrm{PSL}_2(\F_q)$ can also be realized as monodromy groups of Galois rational maps using Dickson invariants \cite{Dickson11}. These realizations fail to admit geometric $\IMG$-lifts by the same arguments used above.

To compute the dimensions of the spaces of additive and semi-additive polynomials over $\overline{\F}_p$, we require an elementary lemma regarding normal forms for polynomials over algebraically closed fields.

\begin{lemma}\label{lem:normalforms}
Let $k = \overline{k}$ be an algebraically closed field of characteristic $p \geq 0$. Let $f(z) \in k[z]$ be a polynomial of degree $d \geq 2$. Then the collection of $\varphi \in \Aut(\P^1_k)$ such that $f^\varphi$ is a monic polynomial fixing $0$ is finite.
\end{lemma}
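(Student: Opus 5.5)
The plan is to first pin down which $\varphi$ can send $f$ to a polynomial at all, and then count how much freedom the normalization conditions leave. Write $\varphi(z) = \frac{az+b}{cz+d}$ with $ad-bc\neq 0$.

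Since $f$ is a polynomial of degree $d\geq 2$, the point $\infty$ is totally ramified and totally invariant: $f^{-1}(\infty)=\{\infty\}$ with ramification index $d$. The conjugate $f^\varphi = \varphi^{-1}\circ f\circ \varphi$ is a polynomial exactly when $\infty$ is totally invariant for $f^\varphi$, that is, when $\varphi(\infty)$ is totally invariant for $f$. So the first step is to show that for a polynomial of degree $d\geq 2$ the set $E$ of totally invariant points is finite. I would argue this as follows. If $w\in E$, then $f^{-1}(w)=\{w\}$, so the fiber over $w$ is a single point of multiplicity $d$. Hence $f(z)-f(w)$ has a single root of multiplicity $d$, so $w$ is a critical point of $f$ (or $w=\infty$). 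There are only finitely many such points, since $f(z)-z$ has finitely many roots and, in any case, $w$ must be a fixed point of $f$. This holds in every characteristic, including when $f'\equiv 0$: if $w$ is totally invariant then $f(w)=w$, and the fixed points of a polynomial of degree $d\geq 2$ are the finitely many roots of $f(z)-z$ together with $\infty$. So $\varphi(\infty)$ lies in the finite set $\{\infty\}\cup\{\text{roots of } f(z)-z\}$.

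Next I impose the condition that $0$ is fixed. Since $f^\varphi(0)=0$ means $f(\varphi(0))=\varphi(0)$, the point $\varphi(0)$ is a fixed point of $f$ in $\P^1$ different from $\varphi(\infty)$. Again there are finitely many choices. Fixing the pair $(\varphi(\infty),\varphi(0))$ determines $\varphi$ up to precomposition with a scaling $z\mapsto \lambda z$, $\lambda\in k^\times$.

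The last step uses monicity to cut the one-parameter family of scalings down to a finite set. Let $\varphi_0$ be one such map, with $h=f^{\varphi_0}$ a polynomial with leading coefficient $c\neq 0$. For $\psi(z)=\lambda z$ we have
\[
h^\psi(z)=\lambda^{-1}h(\lambda z),
\]
whose leading coefficient is $c\lambda^{d-1}$. Requiring $c\lambda^{d-1}=1$ gives at most $d-1$ solutions for $\lambda$, since $d\geq 2$. Combining the three finite sets of choices bounds the number of admissible $\varphi$ by a finite number.

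The main subtlety is the first step in positive characteristic, where critical-point arguments can break down. Bypassing them by noting that a totally invariant point must be a fixed point avoids the issue entirely.
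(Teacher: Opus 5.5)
Your proof is correct. Its counting matches the paper's: monicity leaves finitely many scalings (the paper's $\alpha$ with $\alpha^{d-1}a_d=1$), and fixing $0$ forces $\varphi(0)$ to be a fixed point of $f$ (the paper's $\beta$).

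The one real difference is the first step. The paper asserts that $f^\varphi$ can only be a polynomial if $\varphi(z)=\alpha z+\beta$. This is false whenever $f$ has a finite totally invariant point. For $f(z)=z^d$ and $\varphi(z)=1/z$, one gets $f^\varphi(z)=z^d$, a monic polynomial fixing $0$, yet $\varphi$ is not affine. Similarly, in characteristic $p$ the map $z^p$ is fixed under conjugation by every element of $\mathrm{PGL}_2(\F_p)$.

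You instead observe that $f^\varphi$ is a polynomial exactly when $\varphi(\infty)$ is totally invariant for $f$. Such a point is a fixed point of $f$, so it is one of at most $d+1$ points of $\P^1$. This accounts for the extra non-affine conjugacies, so your argument proves the lemma as stated. The paper's argument only handles the $\varphi$ with $\varphi(\infty)=\infty$.

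For the separable additive and semi-additive polynomials where the paper applies the lemma, every finite fiber has at least $p$ distinct points. So $\infty$ is the only totally invariant point there, and the paper's later conclusions are unaffected.
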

\begin{proof}
Fix $a_i \in k$ such that $f(z) = \sum_{i=0}^{d} a_iz^i$. Note that for $f^\varphi(z)$ to be a polynomial, $\varphi$ must take the form $\varphi(z) = \alpha z + \beta$ for some $\alpha, \beta \in k$. Consider the following conditions:
\begin{itemize}
\item Monic: The leading coefficient of $f^\varphi$ is $\alpha^{d-1}a_d$. If $p \nmid (d-1)$, there are precisely $d-1$ choices of $\alpha \in k$ for which $\alpha^{d-1}a_d = 1$. If $p\mid (d-1)$, this number may decrease.
\item Fixing $0$: Recall that
\[f^\varphi(z) = \frac{1}{\alpha}f(\alpha z + \beta) - \frac{\beta}{\alpha}.\]
Setting $z= 0 $ yields
\[f^\varphi(0) = \frac{1}{\alpha}(f(\beta) - \beta).\]
Thus, $f^\varphi$ fixes $0$ if and only if $\beta \in k$ is a fixed point of $f$.
\end{itemize}
In a precise sense, a generic degree $d$ polynomial has $d$ distinct fixed points in $k$ \cite[Theorem 4.1 (iv)]{Silverman98}. Thus, when $p\nmid (d-1)$,  the collection of $\varphi \in \Aut(\P^1_k)$ for which $f^\varphi$ is a monic polynomial fixing zero generically contains $d(d-1)$ elements. This number decreases when $p\mid (d-1)$ or for specific choices of $f(z) \in k[z]$, but is always finite.
\end{proof}

Let $\mathcal{A}_m \subseteq \mathcal{M}_{p^m}(\overline{\F}_p)$ denote the collection of linear conjugacy classes of degree $p^m$ rational maps over $\overline{\F}_p$ which contain an additive, separable polynomial. For any additive, separable $f(z) \in \overline{\F}_p[z]$ of degree $p^m$, the derivative $f'(z)$ is a nonzero constant. In particular, the only critical point of $f$ is $\infty$, and the post-critical orbit $P_f$ consists of a single point mapping to itself with multiplicity $p^m$. Since the graph structure of the post-critical orbit is invariant under the conjugation action by $\Aut(\P^1_{\overline{\F}_p})$, the same is true of all rational maps contained in any linear conjugacy class in $\mathcal{A}_m$. In characteristic zero, Thurston Rigidity would imply that $\mathcal{A}_m$ should be finite. However, we find that $\mathcal{A}_m$ has positive dimension which increases with $m$.

\begin{theorem}\label{thm:additivedimcount}
Fix $m\geq 1$. Then $\dim_{\overline{\F}_p}(\mathcal{A}_m) = m$.
\end{theorem}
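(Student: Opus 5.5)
Since $\mathcal{A}_m$ is the image of the family of additive separable polynomials of degree $p^m$ under the quotient map to $\mathcal{M}_{p^m}(\overline{\F}_p)$, I will build a parameter space for this family and show the quotient map has finite fibers on a suitable normalized slice. By the standard form, an additive separable polynomial of degree $p^m$ is
\[f(z) = \sum_{i=0}^m a_i z^{p^i}, \qquad a_0 \neq 0,\ a_m \neq 0.\]
It automatically fixes $0$, since $f(0)=0$. So the family is parametrized by the open set $U = \{(a_0,\dots,a_m) : a_0 a_m \neq 0\} \subseteq \A^{m+1}$, which has dimension $m+1$.

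First I normalize. Conjugating by $\varphi(z)=\alpha z$ with $\alpha \in \overline{\F}_p^\times$ gives $f^\varphi(z) = \alpha^{-1} f(\alpha z) = \sum_i a_i \alpha^{p^i - 1} z^{p^i}$, which is again additive and separable. The leading coefficient becomes $a_m\alpha^{p^m-1}$. Because $p \nmid p^m - 1$, we can choose $\alpha$ (among $p^m-1$ possibilities) to make $f^\varphi$ monic. Hence every class in $\mathcal{A}_m$ contains a monic additive separable polynomial, and these are parametrized by
\[V = \{(a_0,\dots,a_{m-1}) : a_0 \neq 0\} \subseteq \A^m,\]
an irreducible variety of dimension $m$. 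The natural map $\pi : V \to \mathcal{M}_{p^m}(\overline{\F}_p)$ is a morphism (composing $V \to \mathrm{Rat}_{p^m}$ with the quotient), and its image is exactly $\mathcal{A}_m$.

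Next I bound the fibers. Suppose two points of $V$ map to the same class, i.e.\ $g = f^\psi$ for some $\psi \in \Aut(\P^1)$. Both $f$ and $g$ are monic polynomials fixing $0$. Lemma \ref{lem:normalforms} says the set of $\psi$ for which $f^\psi$ is a monic polynomial fixing $0$ is finite, so each fiber of $\pi$ is finite. I should also check that Lemma \ref{lem:normalforms} applies even though here $p \mid (p^m - 1) $ fails. This is fine: the lemma's finiteness argument only needs finitely many $\alpha$ solving $\alpha^{d-1}a_d = 1$ and finitely many fixed points $\beta$. Here $d-1 = p^m-1 \ge 1$, and $f(z)-z$ is a nonzero polynomial of degree $p^m$, so both counts are finite.

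Finally, the fiber dimension theorem applied to $\pi: V \to \overline{\pi(V)}$ gives $\dim \mathcal{A}_m = \dim V = m$, with $\mathcal{A}_m$ constructible and its closure irreducible. The main obstacle is justifying that $\pi$ is a genuine morphism to the coarse moduli space $\mathcal{M}_d$, so that dimension theory applies; I would invoke Silverman's construction of $\mathcal{M}_d$ as a geometric quotient of $\mathrm{Rat}_d$ by $\mathrm{PGL}_2$. One subtlety must be handled: $\mathcal{A}_m$ consists of classes of rational maps, but any rational map in such a class is conjugate to a polynomial, so the class is determined by its polynomial representatives. This justifies restricting attention to $V$.
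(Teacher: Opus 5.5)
Your proposal is correct and follows essentially the same route as the paper. Both parametrize the monic additive separable polynomials $z^{p^m}+\sum_{i<m}a_iz^{p^i}$ with $a_0\neq 0$ by an irreducible $m$-dimensional variety, use Lemma~\ref{lem:normalforms} to show the map to $\mathcal{M}_{p^m}(\overline{\F}_p)$ is finite-to-one, and conclude by a fiber-dimension argument. If anything, you are a bit more careful than the paper in two places: you justify surjectivity explicitly by the scaling conjugation with $\alpha^{p^m-1}=a_m^{-1}$ (the paper attributes surjectivity to the lemma), and you treat $\mathcal{A}_m$ as a constructible set with irreducible closure rather than asserting that the image is closed.
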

\begin{proof}
We begin by realizing $\mathcal{A}_m$ as an irreducible affine variety over $\overline{\F}_p$, so that the stated dimension is well-defined. In particular, we can realize $\mathcal{A}_m$ as an irreducible Zariski-closed subscheme of $\mathcal{M}_{p^m}(\overline{\F}_p)$. Let $\mathrm{Rat}_{p^m}(\overline{\F}_p)$ denote the moduli space of degree $p^m$ rational maps over $\overline{\F}_p$ as defined by Silverman \cite{Silverman98}. Coordinates on this space (roughly) correspond to choices of coefficients of a chosen rational map. The collection $A_m$ of polynomials of the form $\sum_{i=0}^m a_iz^{p^i}$ is realizable as an irreducible Zariski-closed subset of $\mathrm{Rat}_{p^m}(\overline{\F}_p)$ by taking the zero locus of the coefficients not listed. The quotient map $\mathrm{Rat}_{p^m}(\overline{\F}_p) \to \mathcal{M}_{p^m}(\overline{\F}_p)$ then takes the closed, irreducible $A_m \subset \mathrm{Rat}_{p^m}(\overline{\F}_p)$ to the closed, irreducible $\mathcal{A}_m \subset \mathcal{M}_{p^m}(\overline{\F}_p)$.

Consider the map $\A^{m-1}_{\overline{\F}_p} \times (\A^1_{\overline{\F}_p} \setminus \{0\}) \to \mathcal{A}_m$ defined by
\[(a_{m-1}, \dots, a_1, a_0) \mapsto \left[z^{p^m} + \sum_{i=0}^{m-1} a_i z^{p^i}\right].\]
Since the polynomials defined in the target are monic and fix $0$, Lemma \ref{lem:normalforms} implies this map is surjective and finite-to-one. Standard algebro-geometric arguments then imply it is dimension-preserving (see, for example, \cite[\href{https://stacks.math.columbia.edu/tag/02JX}{Lemma 02JX}]{stacks-project}). 
\end{proof}

Fix $n, \ell\geq 1$ with $p\nmid n$. Let $e$ denote the order of $p$ in $(\Z/n\Z)^\times$. Let $\mathcal{A}_{n, \ell} \subseteq \mathcal{M}_{p^{\ell e} n}(\overline{\F}_p)$ denote the collection of linear conjugacy classes of degree $p^{\ell e}n$ rational maps over $\overline{\F}_p$ which contain a separable, $e$-divisible, semi-additive polynomial. Let $g(z) \in \overline{\F}_p[z]$ be a separable $e$-divisible semi-additive polynomial of degree $p^{\ell e}n$. Then the derivative $g'(z) = na_{\ell}(f(z))^{n-1}$. The post-critical orbit $P_g$ consists of the point at infinity mapping to itself with multiplicity $p^{\ell e}n$ and the $p^{\ell e}$ distinct roots of $f$ mapping to zero with multiplicity $n$. The post-critical orbit of any rational map in any linear conjugacy class of $\mathcal{A}_{n, \ell}$ must also have this graph structure. However, the dimension of $\mathcal{A}_{n,\ell}$ is positive and grows with $\ell$.

\begin{theorem}
Fix $n, \ell \geq 1$ with $p\nmid n$. Then $\dim_{\overline{\F}_p}(\mathcal{A}_{n, \ell}) = \ell$. 
\end{theorem}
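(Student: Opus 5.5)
I will mirror the proof of Theorem \ref{thm:additivedimcount}. First I realize $\mathcal{A}_{n,\ell}$ as an irreducible closed subset of $\mathcal{M}_{p^{\ell e}n}(\overline{\F}_p)$. Let $B_{n,\ell}\subset \mathrm{Rat}_{p^{\ell e}n}(\overline{\F}_p)$ be the image of the morphism
\[(a_0,\dots,a_\ell)\mapsto \Big(\sum_{i=0}^{\ell} a_i z^{p^{ie}}\Big)^n.\]
Its source is the irreducible affine space $\A^{\ell+1}$ (restricted to $a_0a_\ell\neq 0$), so $B_{n,\ell}$ is irreducible. The quotient map to $\mathcal{M}_{p^{\ell e}n}$ sends $B_{n,\ell}$ onto $\mathcal{A}_{n,\ell}$, which is therefore irreducible, and its dimension is well defined. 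As in the additive case, I take the closure where needed.

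Next I normalize. Conjugating by $\varphi(z)=\alpha z$ gives
\[g^\varphi(z)=\alpha^{-1}\big(f(\alpha z)\big)^n.\]
The polynomial $f(\alpha z)$ is again $e$-divisible additive with coefficients $a_i\alpha^{p^{ie}}$. Hence
\[g^\varphi=\Big(\sum_i a_i\alpha^{p^{ie}-1/n}\, z^{p^{ie}}\Big)^n,\]
where an $n$-th root of $\alpha^{-1}$ is absorbed into $f$. This uses $z^n\circ(\beta f)=\beta^n z^n\circ f$. Choosing $\alpha$ suitably makes the leading coefficient $a_\ell^n\alpha^{p^{\ell e}n-1}$ equal to $1$; after absorbing the root of unity ambiguity, we may take $a_\ell=1$. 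The polynomial $g$ already fixes $0$, since $f(0)=0$. So every class in $\mathcal{A}_{n,\ell}$ is the image of a point under
\[\Psi:\A^{\ell-1}\times(\A^1\setminus\{0\})\to\mathcal{A}_{n,\ell},\qquad (a_{\ell-1},\dots,a_1,a_0)\mapsto\Big[\Big(z^{p^{\ell e}}+\sum_{i=0}^{\ell-1}a_iz^{p^{ie}}\Big)^n\Big].\]
The source has dimension $\ell$. Each target is a monic polynomial fixing $0$, so Lemma \ref{lem:normalforms} bounds the number of such normal forms in a given conjugacy class.

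The main obstacle is finite-to-one-ness: distinct parameter tuples must give distinct polynomials. Lemma \ref{lem:normalforms} says a class contains only finitely many monic polynomials fixing $0$, and I must check that each such polynomial arises from finitely many tuples $(a_i)$. If $f_1^n=f_2^n$ with both $f_i$ monic, then $f_1=\zeta f_2$ for some $\zeta$ with $\zeta^n=1$, and comparing leading coefficients forces $\zeta=1$. So $(a_i)$ is recovered from $g$ uniquely. Thus $\Psi$ is surjective with finite fibers, and the cited Stacks lemma gives $\dim\mathcal{A}_{n,\ell}=\ell$.

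I expect the surjectivity normalization to need the most care: verifying that the scaling $\alpha$ keeps $f$ in $e$-divisible form, with the root-of-unity bookkeeping handled by $\zeta_n^{p^{ie}}=\zeta_n$ as in Theorem \ref{thm:semiadditivemonodromy}.
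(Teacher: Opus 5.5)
Your proposal is correct and follows the paper's proof: irreducibility of $\mathcal{A}_{n,\ell}$, the normal-form map $\Psi$ from $\A^{\ell-1}\times(\A^1\setminus\{0\})$, finiteness via Lemma \ref{lem:normalforms}, and the Stacks dimension lemma. You also supply two details the paper only asserts. First, you check that a monic polynomial $(z^{p^{\ell e}}+\cdots)^n$ determines its tuple uniquely. Second, you get irreducibility from the image of an open subset of affine space, rather than from the paper's claim that $A_{n,\ell}$ is cut out by linear equations.
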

\begin{proof}
We begin by realizing $\mathcal{A}_{n, \ell}$ as an irreducible Zariski-closed subscheme of $\mathcal{M}_{p^{\ell e}n}(\overline{\F}_p)$. As in the proof of Theorem \ref{thm:additivedimcount}, the set $A_{n,\ell}$ of polynomials of the form $z^n \circ \sum_{i=0}^\ell a_iz^{p^{i e}}$ is defined by the vanishing locus of a collection of degree one polynomials in the coordinates of $\mathrm{Rat}_{p^{\ell e}n}(\overline{\F}_p)$. Hence, $A_{n, \ell}$ is irreducible and closed in $\mathrm{Rat}_{p^{\ell e}n}(\overline{\F}_p)$ and is mapped to the irreducible, closed $\mathcal{A}_{n, \ell} \subset \mathcal{M}_{p^{\ell e}n}(\overline{\F}_p)$ under the quotient map $\mathrm{Rat}_{p^{\ell e}n}(\overline{\F}_p) \to \mathcal{M}_{p^{\ell e}n}(\overline{\F}_p)$.

Consider the map $\A^{\ell - 1}_{\overline{\F}_p} \times (\A^1_{\overline{\F}_p} \setminus \{0\}) \to \mathcal{A}_{n, \ell}$ defined by
\[(a_{\ell - 1}, \dots, a_1, a_0) \mapsto \left[z^n \circ \left(z^{\ell e} + \sum_{i=0}^{\ell - 1} a_i z^{p^{i e}}\right)\right].\]
As in the proof of Theorem \ref{thm:additivedimcount}, Lemma \ref{lem:normalforms} implies this map is surjective and finite-to-one, hence dimension-preserving.
\end{proof}

\section{An Exceptional Curve in $\mathcal{M}_p(\overline{\F}_p)$}\label{section:GMlifts}

We now focus on the case of additive, separable polynomials of degree $p$. The post-critical orbit of every additive, separable polynomial of degree $p^m$ consists of a single point mapping to itself with multiplicity $p^m$.
When $m = 1$, the converse is true as well, i.e. every rational map with precisely one critical point whose ramification index is $p$ must be conjugate to an additive polynomial. Let $\Gamma(\overline{\F}_p) \subseteq \mathcal{M}_p(\overline{\F}_p)$ denote the collection of linear conjugacy classes of degree $p$ rational maps with the described post-critical orbit.
\begin{proposition}
A complete set of distinct representatives for $\Gamma(\overline{\F}_p)$ is given by the following family:
\[\{f_c(z) = z^p - cz : c \in \overline{\F}_p^\times\}.\]
\end{proposition}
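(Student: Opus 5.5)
The plan has three parts. First we show that every class in $\Gamma(\overline{\F}_p)$ contains some $f_c$ with $c\neq 0$. Then we check that each $f_c$ actually lies in $\Gamma(\overline{\F}_p)$. Finally we show that distinct values of $c$ give distinct conjugacy classes.

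For the first part, let $f$ be a degree $p$ rational map with exactly one critical point, of ramification index $p$, which is fixed by $f$. Conjugate so that this point is $\infty$. Since $e_\infty = p = \deg f$, the fiber over $f(\infty)=\infty$ is just $\{\infty\}$, so $f$ is a polynomial of degree $p$. Because $\infty$ is the only critical point, $f'(z)$ has no zeros in $\overline{\F}_p$, so $f'$ is a nonzero constant. Write $f(z)=\sum_{i=0}^p b_i z^i$. Then $f' = \sum_{i=1}^{p-1} i b_i z^{i-1}$, and $f'$ constant forces $b_i = 0$ for $2\le i\le p-1$. Hence $f(z) = b_p z^p + b_1 z + b_0$ with $b_1\neq 0$ and $b_p \neq 0$.

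Next we normalize by an affine map $\varphi(z)=\alpha z+\beta$, following the computation in Lemma \ref{lem:normalforms}. We choose $\alpha$ with $\alpha^{p-1}b_p=1$, which makes $f^\varphi$ monic. We choose $\beta$ to be a fixed point of $f$, i.e. a root of $b_p z^p + (b_1-1)z + b_0$; such a root exists since $\overline{\F}_p$ is algebraically closed. Then $f^\varphi$ is monic, fixes $0$, and still has only the terms $z^p$ and $z$, because conjugating by an affine map preserves the shape $z^p + (\text{linear})$ and the constant term vanishes. So $f^\varphi = z^p - cz$, where $-c$ equals the linear coefficient $b_1$. Since that coefficient is nonzero, $c\neq 0$.

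Conversely, $f_c'=-c$ is a nonzero constant, so $\infty$ is the unique critical point, it has index $p$, and it is fixed. Thus $[f_c]\in\Gamma(\overline{\F}_p)$.

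The main step is distinctness. Suppose $f_c^\varphi = f_{c'}$. Then $\varphi$ fixes $\infty$, being the unique critical point, so $\varphi = \alpha z+\beta$. Comparing leading coefficients gives $\alpha^{p-1}=1$, so $\alpha\in\F_p^\times$. Compare linear coefficients: since $(\alpha z+\beta)^p = \alpha^p z^p + \beta^p$, the linear term of $\frac{1}{\alpha}f_c(\alpha z+\beta)$ is $-cz$, so $c'=c$. One could also argue more invariantly, since the multiplier $f'=-c$ at any finite fixed point is a conjugacy invariant; either way $c$ is determined by the class. We should double check that the constant term condition $\beta^p - c\beta - \beta = 0$ only restricts $\beta$ and imposes nothing on $c$. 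It does, so the map $c\mapsto[f_c]$ is injective and the family is a complete set of distinct representatives.
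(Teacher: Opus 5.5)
Your proof is correct and follows essentially the paper's route: move the unique fixed, totally ramified critical point to $\infty$, reduce to the shape $b_pz^p+b_1z+b_0$, normalize by an affine map, and separate classes by the linear coefficient, which is the multiplier $-c$ at the finite fixed points. Your version is somewhat more careful than the paper's in three places: you justify why $f$ becomes a polynomial with no middle terms, you fix the constant term by taking $\beta$ to be a fixed point, and you note that the multiplier equals $-c$ at \emph{every} finite fixed point rather than only at $0$.
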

\begin{proof}
Given $f_c(z) = z^p - cz$, note that $f_c'(z) = -c$. Thus, when $c\neq 0$, the post-critical orbit $P_{f_c}$ has the desired form. Recall that the multiplier at a fixed point is invariant with respect to the action of $\Aut(\P^1_{\overline{\F}_p})$ \cite[Proposition 1.9]{Silverman07}. We can compute $f_c(0) = 0$ and the multiplier $f_c'(0) = -c$. Since this multiplier varies with $c$, distinct values of $c$ give rise to distinct linear conjugacy classes.

Let $g(z) \in \overline{\F}_p(z)$ be the representative of an element in $\Gamma(\overline{\F}_p)$. We can conjugate to assume the lone critical point of $g$ is the point at infinity, i.e. $g(z)$ is a polynomial of degree $p$. Moreover, since $g$ has no finite critical points, $g$ must be of the form
\[g(z) = a_pz^p + a_1z + a_0\]
for some $a_p, a_1, a_0 \in \overline{\F}_p$ with $a_p, a_1 \neq 0$. Fix elements $b,c \in \overline{\F}_p$ such that $b^{p-1} = a_p$ and
$c^p + a_1c + ba_0 = 0$.
Let $\varphi(z) \in \Aut(\P^1_{\overline{\F}_p})$ be the map $\varphi(z) = bz + c$. One can check that
\[g^\varphi(z) = \varphi \circ g \circ \varphi^{-1}(z) = z^p + a_1z.\qedhere\]
\end{proof}

Fix a primitive $p$-th root of unity $\zeta_p \in \overline{\Q}_p$. Let $\lambda = \zeta_p - 1$. In \cite[Theorem 4.1]{Green-Matignon98}, Green and Matignon proved that the $(\Z/p\Z)$-cover of $\P^1_{\overline{\F}_p}$ given by $z^p - z = x$
lifts to the $(\Z/p\Z)$-cover of $\P^1_{\overline{\Q}_p}$ given by
\[\frac{(\lambda Z + 1)^p - 1}{\lambda^p} = X.\]

We use this result to inspire a lift of the first iterate of each polynomial $f_c \in \overline{\F}_p[z]$ to a polynomial $\tilde{f}_s \in \overline{\Q}_p[z]$, obtaining a curve in $\mathcal{M}_p(\overline{\Q}_p)$. As stated in Theorem \ref{thm:noadditivelift}, these newly obtained polynomials will not be geometric $\IMG$-lifts of the dynamical systems defined by the $f_c$. In fact, their failure to do so sheds light on how lifting to characteristic zero affects the dynamical invariants of a wildly ramified rational map.

\begin{proposition}\label{prop:GMlift}
Let $\overline{\Z}_p$ denote the integral closure of $\Z_p$ in $\overline{\Q}_p$. Fix $c \in \overline{\F}_p^\times$. Given $s \in \overline{\Q}_p$, define the polynomial
\[\tilde{f}_s(z) = \frac{(\lambda z + s)^p - s^p}{\lambda^p} \in \overline{\Q}_p[z].\]
Suppose $s \in \overline{\Z}_p$ and $\bar{s}^{p-1} = c \in \overline{\F}_p$.
Then $\tilde{f}_s(z)$ reduces to $f_c(z) = z^p - cz$ modulo $p$.
\end{proposition}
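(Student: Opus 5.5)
The plan is to expand $\tilde{f}_s$ with the binomial theorem and reduce each coefficient separately modulo the maximal ideal $\mf{m}$ of $\overline{\Z}_p$. Here ``modulo $p$'' means reduction via $\overline{\Z}_p \to \overline{\Z}_p/\mf{m} \cong \overline{\F}_p$. Writing $v$ for the valuation normalized by $v(p)=1$, we have $v(\lambda) = 1/(p-1)$. The constant terms $s^p$ cancel, leaving
\[\tilde{f}_s(z) = \sum_{k=1}^{p} \binom{p}{k} \lambda^{k-p} s^{p-k} z^k.\]
We check that every coefficient lies in $\overline{\Z}_p$ and compute its image in $\overline{\F}_p$. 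There are three kinds of terms.

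\textbf{The term $k=p$.} Its coefficient is exactly $1$, so it reduces to $z^p$.

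\textbf{The terms $1<k<p$.} Here $p \mid \binom{p}{k}$, so
\[v\!\left(\binom{p}{k}\lambda^{k-p}s^{p-k}\right) \geq 1 + \frac{k-p}{p-1} = \frac{k-1}{p-1} > 0,\]
using $v(s)\geq 0$. These coefficients therefore lie in $\mf{m}$ and vanish upon reduction.

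\textbf{The term $k=1$.} Its coefficient is $(p/\lambda^{p-1})\, s^{p-1}$. Because $\bar{s}^{p-1} = c$, it suffices to show that $p/\lambda^{p-1}$ is a unit congruent to $-1$ modulo $\mf{m}$. Then this coefficient reduces to $-c$, and altogether $\tilde{f}_s$ reduces to $z^p - cz = f_c(z)$.

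The main obstacle, though a classical one, is this congruence $p/\lambda^{p-1} \equiv -1 \pmod{\mf{m}}$. I would derive it from the cyclotomic polynomial. Evaluating $\Phi_p(x) = \prod_{i=1}^{p-1}(x - \zeta_p^i)$ at $x=1$ gives $p = \prod_{i=1}^{p-1}(1-\zeta_p^i)$. For each $i$,
\[\frac{1-\zeta_p^i}{1-\zeta_p} = 1 + \zeta_p + \cdots + \zeta_p^{i-1} \equiv i \pmod{\mf{m}},\]
since $\zeta_p \equiv 1 \pmod{\mf{m}}$. Hence
\[p = (-\lambda)^{p-1} u, \qquad u \equiv (p-1)! \equiv -1 \pmod{\mf{m}},\]
by Wilson's theorem. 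For odd $p$ we have $(-1)^{p-1} = 1$, which gives $p/\lambda^{p-1} = u \equiv -1$. For $p=2$ one checks directly that $\lambda = -2$, so $p/\lambda = -1$. Substituting this into the $k=1$ coefficient completes the argument.
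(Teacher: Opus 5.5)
Your proof is correct and follows essentially the paper's argument. You expand binomially, kill the middle coefficients because $p \mid \binom{p}{k}$ outweighs the negative power of $\lambda$, and obtain $p/\lambda^{p-1}\equiv -1$ from $\prod_{i=1}^{p-1}(1-\zeta_p^i)=p$ together with Wilson's theorem. Two steps the paper states loosely are made precise in your version: the valuation bound $v\geq (k-1)/(p-1)>0$, and the tracking of the sign $(-1)^{p-1}$, with its separate check at $p=2$.
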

\begin{proof}
We use the binomial theorem to expand $\tilde{f}_s(z)$ as follows:
\[\tilde{f}_s(z) = \sum_{i=0}^{p-1} \binom{p}{i}\frac{(-1)^i}{\lambda^i}z^{p-i}s^i.\]
Recall the factorization of ideals $p \Z_p[\zeta_p] = (\lambda)^{p-1}$ \cite[Lemma 1.4]{Washington82}. When $0 < i < p - 1$, we have $p\mid \binom{p}{i}$ and $p\nmid \lambda^i$, so that the coefficient of $z^i$ vanishes upon reduction modulo $p$. By construction, $\tilde{f}_s(z)$ is monic and so reduces to a monic polynomial modulo $p$. The coefficient of the linear term $z$ in $\tilde{f}_s(z)$ is $s^{p-1}\frac{p}{\lambda^{p-1}}$. By assumption, $s^{p-1}$ reduces to $c$ modulo $p$. Thus, it remains to show that $p/\lambda^{p-1} \equiv -1\mod{p}$.

Recall the identity $\prod_{i=1}^{p-1} (1 - \zeta_p^i) = p$. We can pull a $\lambda$ out of each term on the left hand side to obtain the equality
\[\prod_{i=1}^{p-1} (\zeta_p^{i-1} + \cdots + \zeta_p + 1) = \frac{p}{\lambda^{p-1}}.\]
Note that $\zeta_p \equiv 1\mod{p}$ and so reducing modulo $p$ gives
\[\frac{p}{\lambda^{p-1}} \equiv \prod_{i=1}^{p-1} i \equiv -1\mod{p}.\qedhere\]
\end{proof}

We can compute the derivative
\[\tilde{f}_s'(z) = \frac{p}{\lambda^{p-1}}(\lambda z + s)^{p-1}.\]
Upon lifting, the lone totally ramified critical point of $f_c$ splits into two totally ramified critical points at $\infty$ and $-s/\lambda$. Thus, the post-critical orbit $P_{\tilde{f}_s}$ has the form
\[\begin{tikzcd}
\infty \arrow[loop, distance=2em, in=35, out=325, "p"'] &  & \\
-\frac{s}{\lambda} \ar[r, "p"] & -\frac{s^p}{\lambda^p} \ar[r] & \left(\frac{s}{\lambda} - \frac{s^p}{\lambda^p}\right)^p - \frac{s^p}{\lambda^p} \ar[r] & \cdots
\end{tikzcd}\]

\begin{proposition}
Let $s, s' \in \overline{\Q}_p^\times$. Fix a primitive $(p-1)$-st root of unity $\zeta_{p-1} \in \overline{\Q}_p$. Then $\tilde{f}_s$ is linearly conjugate to $\tilde{f}_{s'}$ if and only if $s' = \zeta_{p-1}^i s$ for some $0 \leq i < p-1$. 
\end{proposition}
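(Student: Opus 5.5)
The plan is to reduce the question to the classical normal form for unicritical polynomials, and then to translate the resulting criterion back into a condition on $s$ and $s'$. Both $\tilde{f}_s$ and $\tilde{f}_{s'}$ are monic polynomials of degree $p$. Any conjugacy between them must fix $\infty$, since $\infty$ is the unique totally invariant point of each. So it suffices to consider affine $\varphi(z)=az+b$. Exactly as in the proof of Lemma \ref{lem:normalforms}, comparing leading coefficients forces $a^{p-1}=1$, so $a$ is a power of $\zeta_{p-1}$.

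\textbf{Step 1: normal form.} Next I would remove the translation. Put $w=\lambda z+s$. A direct computation gives
\[\lambda\tilde{f}_s(z)+s=\frac{w^p}{\lambda^{p-1}}+s-\frac{s^p}{\lambda^{p-1}}.\]
Rescaling by $w=\lambda u$ then conjugates $\tilde{f}_s$ to
\[u\mapsto u^p+c_s, \qquad c_s=\frac{s}{\lambda}-\frac{s^p}{\lambda^p}.\]
In these coordinates the finite critical point $-s/\lambda$ has moved to $u=0$.

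\textbf{Step 2: classify unicritical polynomials.} By the same leading-coefficient argument, $u^p+c$ and $u^p+c'$ are linearly conjugate if and only if $c'=\omega c$ for some $\omega$ with $\omega^{p-1}=1$. Indeed, a conjugacy $\varphi(u)=au+b$ must send the unique finite critical point $0$ to itself, so $b=0$, and then $a^{p-1}=1$. Consequently, $\tilde{f}_s\sim\tilde{f}_{s'}$ if and only if $c_{s'}=\zeta_{p-1}^i c_s$ for some $0\le i<p-1$.

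\textbf{Step 3: the ``if'' direction.} Suppose $s'=\zeta_{p-1}^i s$. Since $\zeta_{p-1}^p=\zeta_{p-1}$, both terms of $c_s$ scale the same way, so $c_{\zeta_{p-1}^i s}=\zeta_{p-1}^i c_s$, and Step 2 gives the conjugacy. Equivalently, one can verify directly that $\varphi(z)=\zeta_{p-1}^i z$ conjugates $\tilde{f}_s$ to $\tilde{f}_{s'}$.

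\textbf{Step 4: the ``only if'' direction (main obstacle).} Write $g(x)=x/\lambda-x^p/\lambda^p$. By Step 2, conjugacy gives $g(s')=\zeta_{p-1}^i g(s)=g(\zeta_{p-1}^i s)$. The remaining task is to deduce $s'=\zeta_{p-1}^i s$. This requires controlling the fibres of the degree-$p$ map $g$. I expect this to be the hardest step, and it is not clear to me that the claim holds without extra input.

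My first attempt would be to factor
\[g(x)-g(y)=(x-y)\Bigl(\tfrac{1}{\lambda}-\tfrac{1}{\lambda^{p}}\textstyle\sum_{j=0}^{p-1}x^{j}y^{p-1-j}\Bigr).\]
I would then try to rule out zeros of the second factor with $x=s'$ and $y=\zeta_{p-1}^i s$. The tool would be a valuation argument in $\overline{\Z}_p$, working in the integral setting of Proposition \ref{prop:GMlift}, where $\bar{s}^{p-1}=c\neq0$. Concretely, I would compare the $\lambda$-adic valuations of $\lambda^{p-1}$ and of $\sum_j x^j y^{p-1-j}$. The latter sum reduces modulo $\lambda$ to $p\,\bar{s}^{p-1}$ when $\bar{s}'=\bar{y}$, and to something else otherwise.

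If the valuations cannot separate the two sides in general, then the other roots of the second factor are genuine additional preimages. In that case the argument, at least as I would set it up, only establishes the stated equivalence under a hypothesis that excludes them.
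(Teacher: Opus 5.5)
Your Steps 1--3 are correct, and your doubt about Step 4 is justified: the ``only if'' direction of the proposition is false, so no argument, valuation-theoretic or otherwise, can complete Step 4. Your normalization already shows why. Write $s=\lambda a$. Then $\tilde{f}_{\lambda a}(z)=(z+a)^p-a^p$ and $c_s=a-a^p$. Whenever $a-a^p=b-b^p$, the translation $\varphi(z)=z+b-a$ satisfies $\varphi^{-1}\circ\tilde{f}_{\lambda a}\circ\varphi=\tilde{f}_{\lambda b}$.

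Now suppose $a-a^p\neq 0$ and $1-pa^{p-1}\neq 0$. Then $a$ is a simple root of the degree-$p$ polynomial $x-x^p-(a-a^p)$, so that polynomial has a root $b\neq a$. This $b$ is nonzero. It is also not $\omega a$ for any $\omega\neq 1$ with $\omega^{p-1}=1$, because $\omega a-(\omega a)^p=\omega(a-a^p)\neq a-a^p$.

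Two concrete counterexamples:
\begin{itemize}
\item For $p=2$ we have $\lambda=-2$, $\zeta_{p-1}=1$ and $\tilde{f}_s(z)=z^2-sz$. The maps $\tilde{f}_1=z^2-z$ and $\tilde{f}_{-3}=z^2+3z$ are conjugate by $z\mapsto z+2$.
\item For $p=3$, the parameters $s=5\lambda/7$ and $s'=3\lambda/7$ give maps conjugate by $z\mapsto z-2/7$, although $s'\neq\pm s$.
\end{itemize}

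Restricting to the integral setting $s\in\overline{\Z}_p$, $\bar{s}^{p-1}=c$ does not help. In the $p=2$ example, $1$ and $-3$ are units with the same reduction. In general, for a unit $s$, substitute $s'=s(1+\lambda t)$ into $g(s')=g(s)$ with your $g$, and divide by $st$. This gives
\[
\frac{(1+\lambda t)^p-1}{\lambda^p t}=s^{1-p},
\]
a monic equation of degree $p-1$ in $t$ with coefficients in $\overline{\Z}_p$. Hence every solution $s'\neq s$ is a unit congruent to $s$ modulo $\lambda$. That is exactly the case where your proposed valuation comparison is inconclusive.

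The correct statement is your Step 2 criterion: $\tilde{f}_s$ and $\tilde{f}_{s'}$ are conjugate if and only if $c_{s'}=\omega c_s$ for some $\omega$ with $\omega^{p-1}=1$. So $s\mapsto[\tilde{f}_s]$ is generically $p(p-1)$-to-one, not $(p-1)$-to-one. Its fibres are still finite, which is all the surrounding discussion in the paper uses.

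The paper's proof of ``if'' is your Step 3. Its proof of ``only if'' compares the multipliers of $\tilde{f}_s$ and $\tilde{f}_{s'}$ at their common fixed point $0$. That multiplier is not a conjugacy invariant, since a conjugacy need not send $0$ to $0$; only the multiset of multipliers over all fixed points is invariant. In the $p=2$ example, $z^2-z$ has multipliers $-1,3$ at its fixed points $0,2$, and $z^2+3z$ has multipliers $3,-1$ at its fixed points $0,-2$. The conjugacy $z\mapsto z+2$ carries the fixed point $0$ of $z^2+3z$ to the fixed point $2$ of $z^2-z$. Also, $\tilde{f}_s'(0)=(p/\lambda^{p-1})s^{p-1}$, which equals $-s^{p-1}$ only modulo $\lambda$.

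One small point about your Step 2: $\infty$ is not the unique totally invariant point when $c_s=0$, since then $\tilde{f}_s$ is conjugate to $z^p$. The criterion still holds, because $u^p+c$ has a finite totally invariant point only when $c=0$.
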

\begin{proof}
Note that $\tilde{f}_s(0) = 0$ and the multiplier $\tilde{f}_s'(0)= - s^{p-1}$. Since the multiplier at a fixed point is invariant with respect to linear conjugacy, we know that $s^{p-1} \neq  (s')^{p-1}$ implies $\tilde{f}_s$ is not linearly conjugate to $\tilde{f}_{s'}$. 

Suppose $s' = \zeta_{p-1}^i s$ for some $0 \leq i < p-1$. Let $\varphi(z) = \zeta_{p-1}^iz$. Replacing $\zeta_{p-1}^i$ with $(\zeta_{p-1}^i)^p$, we can compute that
\begin{align*}
\tilde{f}_s^\varphi(z) &= \zeta_{p-1}^i\frac{\left(\lambda \frac{z}{\zeta_{p-1}^i} + s\right)^p - s^p}{\lambda^p} \\
&= \frac{(\lambda z + \zeta_{p-1}^i s)^p - (\zeta_{p-1}^i s)^p}{\lambda^p} \\
&= \tilde{f}_{s'}(z).
\qedhere\end{align*}
\end{proof}

For uncountably many values of $s \in \overline{\Z}_p$, the dynamical behavior of $\tilde{f}_s$ is very different from that of the reduction $f_c$. Recall the monodromy action at each level is transitive, and thus $\#\Mon_{\overline{k}}(g^n) \geq d^n$ for any rational $g(z) \in k(z)$ of degree $d$ with nonzero derivative. The maps $f_c$ achieve this lower bound for each $n$, partially as a result of their being post-critically finite. Most of the maps $\tilde{f}_s$ are post-critically infinite, and the order of their monodromy groups will far exceed this bound for each iterate.

\begin{proposition}\label{prop:liftisbad}
For all $s \in \overline{\Q}_p^\times$ outside of a countably infinite set, $\tilde{f}_s$ is post-critically infinite.
\end{proposition}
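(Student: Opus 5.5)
The plan is to show that the set of $s \in \overline{\Q}_p^\times$ for which $\tilde{f}_s$ is post-critically finite is contained in a countable union of finite sets. Since $\overline{\Q}_p$ is uncountable, the complement is then all but countably many $s$. The critical point $\infty$ is fixed, so post-critical finiteness of $\tilde{f}_s$ is equivalent to the orbit of the finite critical point $-s/\lambda$ being finite. That orbit is finite exactly when there exist integers $0 \le i < j$ with
\[\tilde{f}_s^{\,j}(-s/\lambda) = \tilde{f}_s^{\,i}(-s/\lambda).\]

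To make this algebraic in $s$, first change variables to $w = \lambda z + s$. With $u = w^p$, the conjugate map becomes
\[F_s(w) = \lambda \tilde{f}_s\!\left(\frac{w-s}{\lambda}\right) + s = \frac{w^p - s^p}{\lambda^{p-1}} + s.\]
In these coordinates the critical point is $w=0$. Define $P_n(s) = F_s^{\,n}(0) \in \overline{\Q}_p[s]$; then $P_1(s) = s - s^p/\lambda^{p-1}$. One checks by induction that $P_n$ has degree $p^{n}$ in $s$, with leading coefficient a nonzero power of $\lambda^{-1}$ up to sign. Concretely, $P_{n}(s) = P_{n-1}(s)^p/\lambda^{p-1} + (\text{terms of degree} \le p)$, and for $n\ge 2$ the degree $p^{n}$ of the first term strictly exceeds $p$, so no cancellation occurs.

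For each pair $i<j$, the polynomial $Q_{i,j}(s) = P_j(s) - P_i(s)$ then has degree $p^{j} > p^{i}$ with the leading term of $P_j$ surviving. Hence $Q_{i,j}$ is not identically zero and has at most $p^j$ roots. The set of PCF parameters is contained in $\bigcup_{i<j} Z(Q_{i,j})$, a countable union of finite sets, so it is countable. This proves the proposition as stated, and it already gives the ``uncountably many'' claim from the preceding discussion.

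If one also wants the exceptional set to be genuinely infinite, it suffices to exhibit infinitely many PCF parameters. For instance, the roots of $P_n(s)=0$ for each $n$ are such parameters, since there $0$ is periodic. The main care needed is to show that these roots are nonzero and produce infinitely many distinct values of $s$. They do, because the roots of $P_n$ have exact period dividing $n$, and a degree count via Möbius inversion shows that new roots appear at each prime $n$. This is optional for the stated containment.

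The only real obstacle is the degree and leading-coefficient bookkeeping in the induction, which must rule out cancellation so that $Q_{i,j} \not\equiv 0$. Writing $F_s(w)$ as $w^p/\lambda^{p-1}$ plus lower-order terms in $(w,s)$ makes this transparent.
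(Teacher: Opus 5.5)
Your argument is correct and is essentially the paper's: both place the PCF parameters inside the countable union of the zero sets of the preperiodicity equations for the free critical point $-s/\lambda$. Your conjugation to $F_s(w)=(w^p-s^p)/\lambda^{p-1}+s$ and the count $\deg_s P_j = p^j$ show that $P_j - P_i$ is a nonzero polynomial in $s$, which the paper leaves implicit. (Your optional infinitude remark would also need some control of root multiplicities, e.g.\ that the period-one parameters are simple roots of $P_q$, but the statement as given does not require it.)
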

\begin{proof}
Suppose $\tilde{f}_s$ is PCF. Then the critical point $-s/\lambda$ has finite forward orbit. In particular, we can fix positive integers $m,n \in \Z_{\geq0}$ such that $\tilde{f}_s^m(-s/\lambda)$ has primitive period $n$. I.e., $-s/\lambda$ satisfies a polynomial of the form $\tilde{f}_s^m(z) = \tilde{f}_s^{m+n}(z)$. In this way, the elements $s \in \overline{\Q}_p^\times$ for which $\tilde{f}_s$ is PCF form a subset of the locus of vanishing of polynomials in the set
\[\left\{\tilde{f}_s^m(z) - \tilde{f}_s^{m+n}(z) : (m,n) \in \Z_{\geq0}^2\right\}.\]
Since $\Z_{>0}^2$ is countable, and each polynomial in the above set has finitely many solutions in $\overline{\Q}_p$, it follows that their collective locus of vanishing is countable as well.
\end{proof}

Since $\tilde{f}_s$ is a tamely ramified unicritical polynomial, its geometric iterated monodromy group has been computed by Adams and Hyde \cite{AdamsHyde25}. When $\tilde{f}_s$ is post-critically infinite, we have $\IMG_{\overline{\Q}_p}(\tilde{f}_s) \cong \varprojlim \left[\Z/p\Z\right]^n$,
where $[\Z/p\Z]^n$ denotes the $n$-fold iterated wreath product \cite[Proposition 3.14]{AdamsHyde25}. In this case, one can compute that $\log_p(\#\Mon_{\overline{\F}_p}(f_c^n)) = n$ while
\[\log_p(\#\Mon_{\overline{\Q}_p}(\tilde{f}_s^n)) = p^{n-1} + \cdots + p + 1.\]

\printbibliography

\end{document}